\newtheorem{prop} {Proposition} 
\newtheorem{lm} [prop]{Lemme} 
\newtheorem{thm} [prop] {Théorème}
\theoremstyle{definition}
\newtheorem{df}{Définition} 
\newtheorem*{df*}{Définition}
\theoremstyle{remark}
\newtheorem{rmq}{Remarque} 
\newtheorem{example}{Exemple} 
\newtheorem{exm}[example]{Exemple}
\author{Stéphane \textsc{Dugowson}
\footnote{s.dugowson@gmail.com}
}
\title {Introduction aux topos des espaces connectifs. Morita-équivalences avec les espaces topologiques et les ensembles ordonnés dans le cas fini.}
\date{4 mars 2018} 
\begin{document}

\maketitle

\paragraph{Résumé.} Ce texte comporte deux parties. Dans la première, nous rappelons et détaillons  la définition du topos de Grothendieck d'un espace connectif et des faisceaux sur un tel espace \cite{Dugowson:20161015}. Dans la seconde, nous donnons la preuve du fait que tout espace connectif fini est Morita-équivalent à un espace topologique fini, et réciproquement (nous avions présenté cette preuve dans plusieurs séminaires \cite{Dugowson:20161207, Dugowson:20170127, Dugowson:20170503}, mais nous n'en avions pas encore partagé la rédaction).

\subparagraph{\emph{Mots clés.}} Topos de Grothendieck. Espaces connectifs finis. Espaces topologiques finis. Équivalence de Morita.

\paragraph{Abstract.} \textsc{Toposes of connectivity spaces. Morita equivalences with topological spaces and partially ordered sets in the finite case ---} This paper has two parts. First, we recall and detail the definition of the Grothendieck topos of a connectivity space, that is the topos of sheaves on such a space \cite{Dugowson:20161015}. In the second part, we prove that every finite connectivity space is Morita-equivalent to a finite topological space, and vice versa (we have given this proof in several seminars \cite{Dugowson:20161207, Dugowson:20170127, Dugowson:20170503}, but we haven't yet shared this in writing).

\subparagraph{\emph{Key words.}} Grothendieck Topos. Finite connectivity spaces. Finite topological spaces. Morita equivalence.

\paragraph{MSC 2010 :} 18B25, 54A05.\\

\section{Topos associé à un espace connectif}

Dans cette section, nous reprenons et détaillons la définition du topos associé à un espace connectif donnée dans \cite{Dugowson:20161015}. 

\subsection{Rappels et précisions sur les espaces connectifs}

Pour l'essentiel, nous reprenons ici les notations et les définitions relatives aux espaces connectifs telles qu'elles figurent dans \cite{Dugowson:201012}, les  différences ou les ajouts étant précisés ci-après au fur et à mesure. En particulier :
\begin{itemize}
\item une \emph{structure connective sur un ensemble de points $X$} est un ensemble de parties de $X$ dites \emph{connexes} (pour cette structure) tel que toute famille de parties connexes d'intersection non vide a une union qui est connexe\footnote{Si $X$ est non vide, cela implique que la partie vide est  connexe, mais pour simplifier on suppose la partie vide toujours connexe, même  lorsque $X=\emptyset$.},
\item une structure connective est dite \emph{intègre} lorsque les singletons de $X$ sont connexes, 
\item pour tout ensemble $X$, on note $Cnc_X$ le treillis complet (pour l'inclusion) des structures connectives sur $X$; si $\mathcal{K}$ et $\mathcal{L}$ sont deux structures connectives sur $X$ telles que $\mathcal{K}\subset\mathcal{L}$, on dit que $\mathcal{K}$ est plus fine que $\mathcal{L}$.
\end{itemize}

\subsubsection{Les structures connectives vues comme petites catégories}

En tant qu'ensemble ordonné par l'inclusion, la structure connective $\mathcal{K}=(\mathcal{K},\subset)$ d'un espace connectif $(X,\mathcal{K})$ s'identifie à une petite catégorie. Le cas échéant, nous désignerons par $\overrightarrow{\mathcal{K}}$  l'ensemble des flèches de la catégorie $\mathcal{K}$, autrement dit l'ensemble des inclusions $(B\subset A)$ entre parties de $X$ connexes pour la structure $\mathcal{K}$.

\subsubsection{Structure connective induite sur une partie}

Pour toute partie connexe $A\in\mathcal{K}$ d'un espace connectif $(X,\mathcal{K})$, rappelons que la \emph{structure connective induite sur $A$ par $\mathcal{K}$}, que nous noterons $\mathcal{K}_{\vert A}$, est la structure la moins fine sur $A$ qui fasse de l'injection canonique $A\hookrightarrow X$ un morphisme connectif, et qu'elle est donnée par
\[\mathcal{K}_{\vert A}=\mathcal{K} \cap \mathcal{P}A.\]

\subsubsection{Structure connective engendrée par un ensemble de parties}

Étant donné $\mathcal{A}\subset\mathcal{P}X$ un ensemble de parties d'un ensemble $X$, on notera $[\mathcal{A}]$ --- plutôt que $[\mathcal{A}]_0$, comme c'était le cas dans \cite{Dugowson:201012} --- la structure connective sur $X$ engendrée par $\mathcal{A}$. C'est la structure connective  la plus fine sur $X$ qui contienne $\mathcal{A}$. 

En section \textbf{§\,\ref{subsec cribles couvrants et topo G}}, pour établir la cohérence de la définition d'une topologie de Grothendieck sur une structure connective donnée, nous aurons besoin de la proposition   \ref{prop struct engend dans B} ci-dessous selon laquelle lorsqu'un connexe contenu dans $B\subset X$ est engendré par des parties de $X$, il est en fait engendré par celles de ces parties qui sont elles-mêmes incluses dans $B$, ce qui est tout-à-fait intuitif.

\begin{prop}\label{prop struct engend dans B}
Pour tout ensemble de parties $\mathcal{C}\subset\mathcal{P}X$ et toute partie $B\subset X$ d'un ensemble $X$, on a
\[
[\mathcal{C}\cap\mathcal{P}B]=[\mathcal{C}]\cap\mathcal{P}B,\]
autrement dit $
[\mathcal{C}\cap\mathcal{P}B]=[\mathcal{C}]_{\vert B}$.
\end{prop}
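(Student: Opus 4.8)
The plan is to prove the two inclusions separately, the inclusion $[\mathcal{C}\cap\mathcal{P}B]\subseteq[\mathcal{C}]\cap\mathcal{P}B$ being essentially formal and the reverse inclusion carrying the real content.

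For the first inclusion, I would observe that $[\mathcal{C}]\cap\mathcal{P}B$ is precisely the induced structure $[\mathcal{C}]_{\vert B}$, which by the recalled formula $\mathcal{K}_{\vert A}=\mathcal{K}\cap\mathcal{P}A$ is a genuine connective structure on $B$. Since $\mathcal{C}\cap\mathcal{P}B\subseteq\mathcal{C}\subseteq[\mathcal{C}]$ and $\mathcal{C}\cap\mathcal{P}B\subseteq\mathcal{P}B$, the family $\mathcal{C}\cap\mathcal{P}B$ is contained in this structure; as $[\mathcal{C}\cap\mathcal{P}B]$ is the finest connective structure on $B$ containing $\mathcal{C}\cap\mathcal{P}B$, it follows that $[\mathcal{C}\cap\mathcal{P}B]\subseteq[\mathcal{C}]\cap\mathcal{P}B$. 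The same reasoning, with $[\mathcal{C}]$ in place of $[\mathcal{C}]\cap\mathcal{P}B$, also records the auxiliary fact $[\mathcal{C}\cap\mathcal{P}B]\subseteq[\mathcal{C}]$, which I will reuse below.

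For the reverse inclusion I would avoid any transfinite iteration of the union-closure operation and instead argue by minimality. I would set
\[
\mathcal{M}=[\mathcal{C}\cap\mathcal{P}B]\ \cup\ \{\,C\in[\mathcal{C}]\ :\ C\not\subseteq B\,\}
\]
and claim that $\mathcal{M}$ is a connective structure on $X$ containing $\mathcal{C}$. That it contains $\mathcal{C}$ and $\emptyset$ is immediate by splitting a given $C\in\mathcal{C}$ according to whether $C\subseteq B$ or not. For the defining closure property, I would take a family $(A_i)_{i\in I}$ in $\mathcal{M}$ with $\bigcap_i A_i\neq\emptyset$ and put $A=\bigcup_i A_i$. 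The key observation, and the one place where the hypothesis is used in an essential way, is that if $A\subseteq B$ then necessarily every $A_i\subseteq B$, so each $A_i$ lies in the first component $[\mathcal{C}\cap\mathcal{P}B]$, whence $A\in[\mathcal{C}\cap\mathcal{P}B]\subseteq\mathcal{M}$ since that structure is closed under overlapping unions; whereas if $A\not\subseteq B$ then each $A_i$ lies in $[\mathcal{C}]$ (using $[\mathcal{C}\cap\mathcal{P}B]\subseteq[\mathcal{C}]$ from the first part), so $A\in[\mathcal{C}]$ and $A$ falls into the second component of $\mathcal{M}$.

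Once $\mathcal{M}$ is shown to be a connective structure containing $\mathcal{C}$, minimality of the generated structure gives $[\mathcal{C}]\subseteq\mathcal{M}$. Then any $A\in[\mathcal{C}]\cap\mathcal{P}B$ satisfies $A\subseteq B$, so it cannot belong to the second component $\{\,C:C\not\subseteq B\,\}$, forcing $A\in[\mathcal{C}\cap\mathcal{P}B]$; this is exactly $[\mathcal{C}]\cap\mathcal{P}B\subseteq[\mathcal{C}\cap\mathcal{P}B]$, and combined with the first inclusion it yields the asserted equality $[\mathcal{C}\cap\mathcal{P}B]=[\mathcal{C}]\cap\mathcal{P}B=[\mathcal{C}]_{\vert B}$. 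The only delicate point is the dichotomy in the closure verification, and there the substance reduces to the trivial but crucial remark that a union included in $B$ has all its parts included in $B$; the rest is bookkeeping. The same reverse inclusion can alternatively be obtained by transfinite induction along the stages of the union-closure that builds $[\mathcal{C}]$, the induction step resting on exactly this remark.
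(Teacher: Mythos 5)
Your proof is correct and takes essentially the same route as the paper's: there, the easy inclusion is also obtained by minimality, and the hard one by showing (Lemme \ref{lm A}) that $[\mathcal{C}\cap\mathcal{P}B]\cup(\mathcal{P}X\setminus\mathcal{P}B)$ is a connective structure on $X$ containing $\mathcal{C}$, then intersecting with $\mathcal{P}B$. Your $\mathcal{M}$ differs only in that its second component is cut down by intersecting with $[\mathcal{C}]$ (which costs you the extra remark $\mathcal{M}\subseteq[\mathcal{C}]$), and your case split on whether the \emph{union} lies in $B$ is the contrapositive of the paper's split on whether some \emph{member} of the family escapes $B$ --- the same key observation.
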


\begin{proof} Notons  $\mathcal{N}B=\mathcal{P}X\setminus \mathcal{P}B$, et commençons par prouver le lemme suivant :
\begin{lm}\label{lm A}  Pour tout $\mathcal{L}\in Cnc_B$, on a $\mathcal{L}\cup \mathcal{N}B \in {Cnc}_X$ et $(\mathcal{L}\cup \mathcal{N}B)_{\vert B}=\mathcal{L}$.
\end{lm} \begin{proof}[Preuve du lemme \ref{lm A}]\label{proof lm A} Pour toute famille $(A_i)_{i\in I}$ de parties  $A_i\in \mathcal{L}\cup \mathcal{N}B\subset X$ telle que $\bigcap_{i\in I} A_i\neq\emptyset$ on a
\begin{itemize}
\item soit $A_i\in\mathcal{L}$ pour tout $i\in I$, et dans ce cas $\bigcup_{i\in I} A_i \in \mathcal{L}\subset \mathcal{L}\cup\mathcal{N}B$,
\item soit $A_i\in\mathcal{N}B$ pour au moins un $i\in I$, et dans ce cas $\bigcup_{i\in I} A_i \in \mathcal{N}B\subset \mathcal{L}\cup\mathcal{N}B$,
\end{itemize}
et donc $\mathcal{L}\cup \mathcal{N}B$ est bien une structure connective sur $X$. On a alors trivialement $(\mathcal{L}\cup \mathcal{N}B)_{\vert B}=
\{C\in \mathcal{L}\cup \mathcal{N}B, C\subset B\}
=\mathcal{L}$.
\end{proof} 
Soit maintenant 
$\mathcal{C}\subset\mathcal{P}X$ et $B\subset X$. Puisque $\mathcal{P}B$ est une structure connective sur $X$, on a $[\mathcal{P}B]=\mathcal{P}B$ et l'inclusion $[\mathcal{C}\cap \mathcal{P}B]\subset [\mathcal{C}]\cap \mathcal{P}B$ en découle immédiatement. Prouvons l'inclusion inverse. On a $\mathcal{C}=(\mathcal{C}\cap \mathcal{P}B)\cup(\mathcal{C}\cap\mathcal{N}B)$, d'où $\mathcal{C}\subset(\mathcal{C}\cap \mathcal{P}B)\cup\mathcal{N}B\subset [\mathcal{C}\cap \mathcal{P}B]\cup\mathcal{N}B$ d'où, d'après le lemme \ref{lm A} appliqué à $\mathcal{L}=[\mathcal{C}\cap \mathcal{P}B]$, $[\mathcal{C}]\subset [\mathcal{C}\cap \mathcal{P}B]\cup\mathcal{N}B$, d'où l'on déduit finalement l'inclusion annoncée.
\end{proof}

\begin{rmq}\label{rmq struct integ engend dans B} La structure connective \emph{intègre} engendrée par $\mathcal{C}$ --- autrement dit la structure connective intègre la plus fine contenant  $\mathcal{C}$ --- est notée  $[\mathcal{C}]_1$, 
et on a $[\mathcal{C}]_1=[\mathcal{C}] \cup (\bigcup_{x\in X} \{\{x\}\})$. On déduit immédiatement de la proposition \ref{prop struct engend dans B} ci-dessus que pour tout $\mathcal{C}\subset\mathcal{P}X$ et tout $B\subset X$, 
\[
[\mathcal{C}\cap\mathcal{P}B]_1\cap\mathcal{P}B=[\mathcal{C}]_1\cap\mathcal{P}B.\]
\end{rmq}

\subsubsection{Connexes irréductibles}

Rappelons qu'une partie connexe $A\in\mathcal{K}$ de $X$ est dite \emph{irréductible} si et seulement si $A\notin [\mathcal{K}\setminus \{A\}]$. Remarquons que le fait, dans cette définition, de faire appel à l'engendrement de structures connectives \emph{non nécessairement intègres} fait que, lorsqu'ils sont connexes, les points\footnote{C'est-à-dire les singletons de $X$.} sont irréductibles. Par contre, la partie vide de $X$ n'est jamais irréductible.\\

On notera $\mathrm{Irr}_{(X,\mathcal{K})}$ l'ensemble des connexes irréductibles de l'espace ${(X,\mathcal{K})}$.

\subsection{Cribles dans une structure connective}

La catégorie $\mathcal{K}=(\mathcal{K},\subset)$ étant un ensemble ordonné, un crible $\sigma$ dans cette catégorie  pourra s'écrire 
\[\sigma=(\dot{\sigma}, A),\] où 
\begin{itemize}
\item $A\in \mathcal{K}$, codomaine commun des flèches $(B\subset A)\in\sigma$, sera appelé la \emph{cible} ou le \emph{codomaine} du crible $\sigma$, et pourra être noté $A=\mathrm{cod}(\sigma)$,
\item $\dot{\sigma}\subset \mathcal{K}_{\vert A}$, que nous appellerons le domaine du crible, que nous désignerons par $\dot{\sigma}=\mathrm{dom}(\sigma)$  et dont les éléments seront simplement appelés les \emph{éléments du crible}, devra vérifier
 \[
\forall B\in\dot{\sigma}, \mathcal{K}\cap \mathcal{P}B\subset\dot{\sigma}.
\]
\end{itemize}

On notera  $\mathrm{SV}_\mathcal{K}$ l'ensemble
des cribles dans $\mathcal{K}$ et, pour tout connexe $A\in\mathcal{K}$,  $\mathrm{SV}_\mathcal{K}(A)$ --- ou simplement $\mathrm{SV}(A)$ s'il n'y a pas d'ambiguïté ---  l'ensemble des cribles sur $A$ dans la catégorie $\mathcal{K}$.

\begin{rmq}
La \emph{catégorie} $\mathbf{SV}_\mathcal{K}$  des cribles dans  $\mathcal{K}$ est elle-même un ensemble ordonné, une flèche $\varphi: \beta=(\dot{\beta}, B)\rightarrow\alpha=(\dot{\alpha}, A)$ exprimant, si elle a lieu, l'inclusion $\beta\subset \alpha$ définie par les deux conditions $B\subset A$ \emph{et} $\dot{\beta}\subset \dot{\alpha}$.
\end{rmq}

\begin{rmq} 
Il est important de distinguer, pour un connexe $A\in\mathcal{K}$, le crible vide $(\emptyset,A)$, qui n'a pas d'élément, du crible non vide minimal, $(\{\emptyset\}, A)$, qui a un unique élément, à savoir la partie vide de $X$, qui est toujours connexe. Cette remarque vaut en particulier pour la partie vide $A=\emptyset$, qui est donc la cible de deux cribles distincts.
\end{rmq}

Pour tout connexe $A\in\mathcal{K}$, le \emph{crible maximal sur $A$ dans $\mathcal{K}$}, que nous noterons $\mathrm{msv}_\mathcal{K}(A)$  est 
\[
\mathrm{msv}_\mathcal{K}(A)
=(\mathcal{K}\cap{\mathcal{P}A}, A)
=(\mathcal{K}_{\vert A}, A).
\]

La proposition suivante découle immédiatement de la définition d'un crible.
\begin{prop}\label{prop crible max sur A} Pour tout crible $\sigma$  sur $A$ dans $\mathcal{K}$, on a l'équivalence
\[\sigma=\mathrm{msv}_\mathcal{K}(A) \Leftrightarrow A\in\dot{\sigma}.\]
\end{prop}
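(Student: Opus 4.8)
Le plan est de démontrer séparément les deux implications de l'équivalence, en s'appuyant uniquement sur la définition d'un crible sur $A$ et sur celle du crible maximal $\mathrm{msv}_\mathcal{K}(A)=(\mathcal{K}_{\vert A},A)$.

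Pour l'implication directe ($\Rightarrow$), je supposerais $\sigma=\mathrm{msv}_\mathcal{K}(A)$, de sorte que $\dot{\sigma}=\mathcal{K}_{\vert A}=\mathcal{K}\cap\mathcal{P}A$. Comme $A\in\mathcal{K}$ et que trivialement $A\subset A$, on a $A\in\mathcal{K}\cap\mathcal{P}A=\dot{\sigma}$, ce qui donne aussitôt $A\in\dot{\sigma}$.

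Pour la réciproque ($\Leftarrow$), je partirais de l'hypothèse $A\in\dot{\sigma}$ et chercherais à établir l'égalité $\dot{\sigma}=\mathcal{K}_{\vert A}$ (l'égalité des cibles étant ici acquise, les deux cribles portant sur $A$). L'inclusion $\dot{\sigma}\subset\mathcal{K}_{\vert A}$ est déjà garantie par la définition même d'un crible sur $A$. Pour l'inclusion inverse, j'appliquerais la condition de stabilité par restriction $\forall B\in\dot{\sigma},\ \mathcal{K}\cap\mathcal{P}B\subset\dot{\sigma}$ au cas particulier $B=A$ : puisque $A\in\dot{\sigma}$, on obtient $\mathcal{K}\cap\mathcal{P}A\subset\dot{\sigma}$, c'est-à-dire $\mathcal{K}_{\vert A}\subset\dot{\sigma}$. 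En combinant les deux inclusions, on conclut $\dot{\sigma}=\mathcal{K}_{\vert A}$, donc $\sigma=\mathrm{msv}_\mathcal{K}(A)$.

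Je ne m'attends à aucune difficulté substantielle : l'énoncé est une conséquence formelle immédiate des définitions, comme l'annonce d'ailleurs le texte. Le seul point à ne pas manquer est que la condition de clôture par restriction, appliquée à l'élément maximal $B=A$, fournit exactement l'inclusion $\mathcal{K}_{\vert A}\subset\dot{\sigma}$ qui manquait.
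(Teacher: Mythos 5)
Votre preuve est correcte et suit exactement la démarche du texte, qui se contente d'affirmer que la proposition \og découle immédiatement de la définition d'un crible\fg\ : vous explicitez précisément cet argument, en particulier le point clé qu'est l'application de la condition de clôture $\forall B\in\dot{\sigma},\ \mathcal{K}\cap\mathcal{P}B\subset\dot{\sigma}$ au cas $B=A$. Rien à redire.
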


\subsubsection{Image réciproque d'un crible}

Étant donné $\sigma=(\dot{\sigma},A)\in\mathrm{Cr}_\mathcal{K}(A)$ un crible sur $A\in\mathcal{K}$, et $B\in\mathcal{K}$ un connexe quelconque inclus dans $A$, la définition catégorique générale de l'image réciproque par une flèche d'un crible de même cible que cette flèche s'écrit ici
\[
\varphi^*(\sigma)=(\dot{\sigma}\cap \mathcal{P}B,B),
\]
où $\varphi=(B\subset A)\in \overrightarrow{\mathcal{K}}$ est la flèche de la catégorie $\mathcal{K}$ exprimant l'inclusion $A\subset B$.
La formule ci-dessus justifie que $\varphi^*(\sigma)$ soit également noté $\sigma_{\vert B}$ et appelé \emph{la restriction de $\sigma$ à $B\subset A$}, de sorte que $\sigma_{\vert B}=(\dot{\sigma}\cap \mathcal{P}B,B)$ ou encore, posant $\dot{\sigma}_{\vert B}=\dot{\sigma}\cap \mathcal{P}B$
\[
\sigma_{\vert B}=(\dot{\sigma}_{\vert B},B).
\]

\begin{rmq} Pour tout crible $\sigma$ sur $A$ dans $\mathcal{K}$, et pour tout $B\subset A$, on a trivialement l'équivalence suivante, qui généralise la proposition \ref{prop crible max sur A} :
\[\sigma_{\vert B}=\mathrm{msv}_\mathcal{K}(B) 
\Leftrightarrow 
B\in\dot{\sigma}.\]
\end{rmq}

\subsection{Cribles couvrants, topologie de Grothendieck sur $\mathcal{K}$}\label{subsec cribles couvrants et topo G}

\subsubsection{Définition des cribles couvrants}

Pour tout connexe $A\in\mathcal{K}$, on pose
\[
J_{(X,\mathcal{K})}(A)=J(A)=\{
\sigma\in \mathrm{SV}(A), 
[\dot{\sigma}] 
\supset \mathrm{dom}(\mathrm{msv}_\mathcal{K}(A))
\},
\]
où, rappelons-le, $[\dot{\sigma}]$ désigne la structure connective \emph{non nécessairement intègre} engendrée par l'ensemble $\dot{\sigma}\subset \mathcal{K}$. De façon équivalente, on a
\[
J(A)=\{
\sigma\in \mathrm{SV}(A), 
[\dot{\sigma}] 
= \mathcal{K}_{\vert A}
\}.
\]

Anticipant le théorème \ref{thm topo G sur K} donné ci-dessous en section \textbf{§\,\ref{subsubsec thm site et topos}} qui justifiera réellement cette terminologie, les éléments de $J(A)$ seront dès à présent appelés les \emph{cribles couvrants $A$}, ou les \emph{cribles qui recouvrent $A$}.

\subsubsection{Cas des connexes irréductibles}

\begin{prop}
Un connexe $K\in\mathcal{K}$ est irréductible si et seulement si $J(K)$ est un singleton, seul le crible maximal étant alors couvrant :
\[K \in\mathrm{Irr}_{(X,\mathcal{K})}\Leftrightarrow J(K)=\{\mathrm{msv}_{\mathcal{K}}(K)\}.\] 
\end{prop}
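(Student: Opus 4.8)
The plan is to prove the biconditional by unwinding both definitions and showing that the non-maximal covering sieves of $K$ correspond exactly to witnesses of the reducibility of $K$. Recall from the preceding definition that $\sigma \in J(K)$ means $[\dot\sigma] = \mathcal{K}_{\vert K}$, and recall from Proposition \ref{prop crible max sur A} that $\sigma = \mathrm{msv}_\mathcal{K}(K)$ is equivalent to $K \in \dot\sigma$. The maximal sieve is always covering, since $[\dot{\mathrm{msv}}] = [\mathcal{K}_{\vert K}] = \mathcal{K}_{\vert K}$. So the entire content of the statement is that $J(K)$ contains \emph{no other} covering sieve if and only if $K$ is irreducible, i.e. $K \notin [\mathcal{K} \setminus \{K\}]$.

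First I would handle the direction $J(K) = \{\mathrm{msv}_\mathcal{K}(K)\} \Rightarrow K \in \mathrm{Irr}$ by contraposition. Suppose $K$ is reducible, so $K \in [\mathcal{K} \setminus \{K\}]$. I would set $\mathcal{D} = (\mathcal{K} \setminus \{K\}) \cap \mathcal{P}K = \mathcal{K}_{\vert K} \setminus \{K\}$ and build from it a sieve $\sigma$ by taking $\dot\sigma$ to be the downward closure, $\dot\sigma = \{C \in \mathcal{K}_{\vert K} : \exists D \in \mathcal{D},\, C \subset D\}$; this is automatically a sieve in the sense required (stable under $\mathcal{K} \cap \mathcal{P}\,\cdot$), and $K \notin \dot\sigma$ so $\sigma \neq \mathrm{msv}_\mathcal{K}(K)$. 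The key point is then to show $[\dot\sigma] = \mathcal{K}_{\vert K}$, so that $\sigma$ is a genuine second covering sieve. Here I would invoke Proposition \ref{prop struct engend dans B} with $\mathcal{C} = \mathcal{K} \setminus \{K\}$ and $B = K$: it gives $[(\mathcal{K}\setminus\{K\})\cap \mathcal{P}K] = [\mathcal{K}\setminus\{K\}]_{\vert K} = [\mathcal{K}\setminus\{K\}] \cap \mathcal{P}K$, and since $K \in [\mathcal{K}\setminus\{K\}]$ the right-hand side is exactly $\mathcal{K}_{\vert K}$. As $\mathcal{D}$ generates the same structure as its downward closure $\dot\sigma$ (the extra parts are already below elements of $\mathcal{D}$ and hence contained in the generated structure), we get $[\dot\sigma] = \mathcal{K}_{\vert K}$, contradicting $J(K) = \{\mathrm{msv}_\mathcal{K}(K)\}$.

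Conversely, for $K \in \mathrm{Irr} \Rightarrow J(K) = \{\mathrm{msv}_\mathcal{K}(K)\}$, I would again argue contrapositively: take $\sigma \in J(K)$ with $\sigma \neq \mathrm{msv}_\mathcal{K}(K)$, so $K \notin \dot\sigma$. Then $\dot\sigma \subset \mathcal{K}_{\vert K} \setminus \{K\} \subset \mathcal{K} \setminus \{K\}$, hence $[\dot\sigma] \subset [\mathcal{K}\setminus\{K\}]$. But $\sigma$ covering means $[\dot\sigma] = \mathcal{K}_{\vert K} \ni K$, so $K \in [\mathcal{K}\setminus\{K\}]$, i.e. $K$ is reducible. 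This direction is essentially immediate once the definitions are aligned.

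The main obstacle is the first direction, specifically the careful verification that the sieve $\sigma$ I construct from a reducibility witness is both a legitimate sieve and a covering one; the crux is the correct application of Proposition \ref{prop struct engend dans B} to pass between generation inside $\mathcal{P}K$ and generation in all of $\mathcal{P}X$, together with checking that replacing the generating set $\mathcal{D}$ by its downward closure $\dot\sigma$ does not change the generated connective structure. I expect the argument that $[\dot\sigma] = [\mathcal{D}]$ to be the one routine point that nonetheless needs to be stated, since the definition of $J(A)$ is phrased in terms of $[\dot\sigma]$ and $\dot\sigma$ is forced by the sieve condition to be larger than $\mathcal{D}$.
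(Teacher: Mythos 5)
Your proof is correct and follows essentially the same route as the paper's: your second direction is the paper's first sentence in contrapositive form, and the non-maximal covering sieve you build from a reducibility witness is exactly the paper's sieve $(\mathrm{msv}_\mathcal{K}(K)\setminus\{K\},K)$, with Proposition \ref{prop struct engend dans B} invoked to pass between generation inside $\mathcal{P}K$ and in $\mathcal{P}X$ (a step the paper leaves implicit in its ``autrement dit''). The only cosmetic difference is your downward-closure step, which is redundant since $\mathcal{K}_{\vert K}\setminus\{K\}$ is already a sieve domain.
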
 
\begin{proof}
Si $K$ est irréductible, seul un crible dont le domaine contient $K$ peut engendrer $K$ lui-même, ce qui caractérise le crible maximal sur $K$. Et réciproquement, si $J(K)$ ne contient que le crible maximal, alors en particulier le crible $(\mathrm{msv}_\mathcal{K}(K)\setminus \{K\},K)$ n'est pas couvrant, ce qui implique que $K\notin [\mathrm{msv}_\mathcal{K}(K)\setminus \{K\}]$, autrement dit que $K$ est irréductible.
\end{proof}

Par exemple, la partie vide $\emptyset\subset X$ n'étant pas irréductible, les deux cribles sur $\emptyset$ sont nécessairement couvrants.

\subsubsection{Exemples (et un contre-exemple) de cribles couvrants}

\begin{exm}
[Un espace fini d'ordre $2$]
On prend sur $X=\{a,b,c,d,e\}$ la structure connective \[\mathcal{K}=\{\emptyset,\{a\}, \{b\},\{c\},\{d\},\{e\}, \{a,b\}, \{b,c,d\}, \{a,b,c,d\}, X\}.\] Tous les $J(K)$ sont réduits au crible maximal sur $K$, à l'exception du vide $\emptyset$ et de $\{a,b,c,d\}$ qui est également couvert par le crible \[\{\emptyset,\{a\}, \{b\},\{c\},\{d\}, \{a,b\}, \{b,c,d\}\}.\]
\end{exm}

\begin{exm}
[Un crible couvrant la droite réelle]
 Soit $\mathcal{K}_\mathbf{R}$ la structure connective usuelle sur $\mathbf{R}$, c'est-à-dire l'ensemble des intervalles. Pour tout intervalle $I$ et tout $\epsilon> 0$,  l'ensemble des sous-intervalles de $I$ de longueur $<\epsilon$ est un crible couvrant. 
\end{exm}

\begin{exm}[Contre-exemple]
Contrairement à ce qui se passe pour les cribles couvrants un ouvert dans une topologie, il ne suffit pas, pour qu'un crible $\sigma\in\mathrm{Cr}_\mathcal{K}(A)$ soit couvrant que l'union des $B\in \dot{\sigma}$ soit égale à $A$. Par exemple, considérons un ensemble de cinq points $X=\{x_1, x_2, x_3, x_4, x_5\}$ muni de la structure connective intègre $\mathcal{K}$ engendrée par 
$
\{x_1, x_2, x_3\}$,
$\{x_2, x_3, x_4\}$ et 
$\{x_3, x_4, x_5\}$.
 Le crible $\sigma$ sur le connexe $X\in\mathcal{K}$  donné par
\[
\dot{\sigma}=
\{
\emptyset,
\{x_1\}, 
\{x_2\},
\{x_3\},
\{x_4\},
\{x_5\},
\{x_1, x_2, x_3\},
\{x_3, x_4, x_5\}
\}
\] vérifie $X\in [\dot{\sigma}]$, mais n'est pas couvrant pour autant puisque par exemple $\{x_2, x_3, x_4\}\notin [\dot{\sigma}]$.
\end{exm}

\subsubsection{Site et topos associés à un espace connectif}
\label{subsubsec thm site et topos}
\begin{thm} \label{thm topo G sur K} L'application
$J=J_{(X,\mathcal{K})}$ constitue une topologie de Grothendieck sur la catégorie
$\mathcal{K}$. 
\end{thm}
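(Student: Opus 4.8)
Le plan consiste à vérifier directement les trois axiomes d'une topologie de Grothendieck sur la petite catégorie $\mathcal{K}$ vue comme ensemble ordonné : d'abord que, pour tout connexe $A$, le crible maximal $\mathrm{msv}_\mathcal{K}(A)$ appartient à $J(A)$ ; ensuite la \emph{stabilité}, à savoir que si $\sigma\in J(A)$ et si $B\subset A$ est connexe, alors la restriction $\sigma_{\vert B}$ appartient à $J(B)$ ; enfin la \emph{transitivité}, c'est-à-dire que si $\sigma\in J(A)$ et si $\tau$ est un crible sur $A$ tel que $\tau_{\vert B}\in J(B)$ pour tout $B\in\dot{\sigma}$, alors $\tau\in J(A)$. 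Dans les trois cas je me ramènerais à la caractérisation $J(A)=\{\sigma : [\dot{\sigma}]=\mathcal{K}_{\vert A}\}$ et j'exploiterais le fait que $\mathcal{C}\mapsto[\mathcal{C}]$ est un opérateur de clôture, donc croissant et idempotent. L'axiome de maximalité serait alors immédiat : le domaine de $\mathrm{msv}_\mathcal{K}(A)$ étant $\mathcal{K}_{\vert A}$, qui est déjà une structure connective, on a $[\mathcal{K}_{\vert A}]=\mathcal{K}_{\vert A}$, et le crible maximal est donc couvrant.

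Le point de stabilité serait le cœur de la preuve, et c'est là qu'interviendrait de façon essentielle la proposition \ref{prop struct engend dans B}, comme annoncé en \textbf{§\,\ref{subsec cribles couvrants et topo G}}. Par définition $\dot{\sigma}_{\vert B}=\dot{\sigma}\cap\mathcal{P}B$, et cette proposition donne
\[
[\dot{\sigma}_{\vert B}]=[\dot{\sigma}\cap\mathcal{P}B]=[\dot{\sigma}]\cap\mathcal{P}B.
\]
Comme $\sigma\in J(A)$, on a $[\dot{\sigma}]=\mathcal{K}_{\vert A}=\mathcal{K}\cap\mathcal{P}A$, et puisque $B\subset A$ entraîne $\mathcal{P}B\subset\mathcal{P}A$, il viendrait $[\dot{\sigma}_{\vert B}]=\mathcal{K}\cap\mathcal{P}A\cap\mathcal{P}B=\mathcal{K}\cap\mathcal{P}B=\mathcal{K}_{\vert B}$, soit exactement $\sigma_{\vert B}\in J(B)$.

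Pour la transitivité, je remarquerais d'abord que pour chaque $B\in\dot{\sigma}$, l'hypothèse $\tau_{\vert B}\in J(B)$ signifie $[\dot{\tau}\cap\mathcal{P}B]=\mathcal{K}_{\vert B}$, structure qui contient $B$ lui-même ; d'où $B\in[\dot{\tau}\cap\mathcal{P}B]\subset[\dot{\tau}]$. On aurait alors $\dot{\sigma}\subset[\dot{\tau}]$, puis par croissance et idempotence $\mathcal{K}_{\vert A}=[\dot{\sigma}]\subset[\dot{\tau}]$. L'inclusion inverse étant triviale, $\dot{\tau}\subset\mathcal{K}_{\vert A}$ et $\mathcal{K}_{\vert A}$ étant une structure connective, on conclurait $[\dot{\tau}]=\mathcal{K}_{\vert A}$, c'est-à-dire $\tau\in J(A)$.

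Le principal obstacle serait l'axiome de stabilité : sans la proposition \ref{prop struct engend dans B}, rien ne garantirait que l'engendrement, \emph{dans $B$}, des seuls connexes de $\dot{\sigma}$ contenus dans $B$ redonne toute la structure induite $\mathcal{K}_{\vert B}$, l'intuition pouvant ici être trompeuse (cf.\ le contre-exemple des cribles dont l'union des éléments égale $A$ sans être couvrants). Les deux autres axiomes ne relèveraient quant à eux que de manipulations formelles de l'opérateur de clôture $[\cdot]$.
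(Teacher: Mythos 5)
Votre preuve est correcte et suit essentiellement la même démarche que celle du texte : vérification directe des trois axiomes, la proposition \ref{prop struct engend dans B} servant exactement au même endroit (l'axiome de stabilité), et la transitivité reposant sur les mêmes propriétés de croissance et d'idempotence de l'opérateur $[\cdot]$ pour passer de $\dot{\sigma}\subset[\dot{\tau}]$ à $\mathcal{K}_{\vert A}=[\dot{\sigma}]\subset[\dot{\tau}]$. Vous explicitez simplement quelques points que le texte déclare triviaux (la maximalité via $[\mathcal{K}_{\vert A}]=\mathcal{K}_{\vert A}$, et l'inclusion inverse $[\dot{\tau}]\subset\mathcal{K}_{\vert A}$ dans la transitivité).
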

\begin{proof} Nous devons vérifier que, pour tout connexe $A\in\mathcal{K}$,
\begin{enumerate}
\item le crible maximal sur $A$ est couvrant,
\item pour tout connexe $B\subset A$ et tout $\sigma\in J(A)$, on a $\sigma_{\vert B}\in J(B)$,
\item pour tout $\mu\in \mathrm{SV}_\mathcal{K}(A)$, s'il existe $\sigma\in J(A)$ tel que pour tout $B\in\dot{\sigma}$ on ait $\mu_{\vert B}\in J(B)$, alors $\mu\in J(A)$.
\end{enumerate}
Par définition de $J(A)$, la première de ces trois propriété est trivialement satisfaite. Pour vérifier la seconde, donnons-nous une partie connexe $B\subset A$ et un crible couvrant $\sigma\in J(A)$. On a, d'après la proposition \ref{prop struct engend dans B} 
\[
[dom(\sigma_{\vert B})]=[\dot{\sigma}\cap\mathcal{P}B]=[\dot{\sigma}]\cap\mathcal{P}B,
\] mais puisque $\sigma\in J(A)$, on a $[\dot{\sigma}]=\mathcal{K}_{\vert A}$, d'où $[dom(\sigma_{\vert B})]=\mathcal{K}_{\vert B}$, autrement dit $\sigma_{\vert B}\in J(B)$. Vérifions enfin la troisième propriété. Soit donc $\mu\in \mathrm{SV}_\mathcal{K}(A)$ tel qu'il existe $\sigma\in J(A)$ tel que pour tout $B\in\dot{\sigma}$ on ait $\mu_{\vert B}\in J(B)$. Par hypothèse, on a donc $[\dot{\sigma}]\supset \mathcal{K}_{\vert A}$ et, pour tout $B\in\dot{\sigma}$, 
$
[\dot{\mu}]\supset
[\dot{\mu}\cap\mathcal{P}B]\supset\mathcal{K}_{\vert B}\ni B,
$ d'où 
$
[\dot{\mu}]\supset \dot{\sigma}
$ de sorte que 
$
[\dot{\mu}]\supset [\dot{\sigma}]\supset \mathcal{K}_{\vert A},
$ autrement dit $\mu\in J(A)$.
\end{proof}

\begin{df}[Topos associé à un espace connectif] Étant donné $(X,\mathcal{K})$ un espace connectif, on appelle \emph{site associé à cet espace} le site $(\mathcal{K},J)$, où pour tout $A\in\mathcal{K}$ l'ensemble de cribles $J(A)\subset \mathrm{SV}_\mathcal{K}(A) $  est défini par
\[
J(A)=\{
\sigma\in \mathrm{SV}_\mathcal{K}(A), 
[\dot{\sigma}] 
= \mathcal{K}_{\vert A}
\}.
\]
Le \emph{topos $\mathcal{TG}_{(X,\mathcal{K})}=Sh(X,\mathcal{K})$ 
associé à l'espace connectif 
$(X,\mathcal{K})$} est le topos des faisceaux d'ensembles sur le site
 $(\mathcal{K},J)$.
\end{df}

\subsection{Faisceaux sur un espace connectif $(X,\mathcal{K})$}

Dans cette section, nous regardons comment la définition générale d'un faisceau sur un site se décline dans le cas particulier du site défini par un espace connectif.

\subsubsection{\og Foncteurs contravariants\fg, préfaisceaux sur $\mathcal{K}$}

\begin{rmq}
En général, l'expression en toutes lettres \og \emph{$F$ est un foncteur contravariant de $\mathbf{C}$ dans $\mathbf{E}$}\fg\, signifie que $F$ est un foncteur (covariant) $F:\mathbf{C}^{op}\rightarrow \mathbf{E}$, de sorte qu'un tel foncteur peut de façon équivalente être désigné comme \og \emph{foncteur contravariant $F:\mathbf{C}^{op}\rightarrow \mathbf{E}$}\fg\, ou comme \og \emph{foncteur $F:\mathbf{C}^{op}\rightarrow \mathbf{E}$}\fg.
\end{rmq}

Un \emph{préfaisceau (d'ensembles) $F$ sur un espace connectif $(X,\mathcal{K})$} est un foncteur contravariant  $F:\mathcal{K}^{op}\rightarrow\mathbf{Ens}$, autrement dit c'est un foncteur $(\mathcal{K},\supset)\rightarrow\mathbf{Ens}$.

\subsubsection{Limite projective d'un préfaisceau sur un ensemble de connexes}

Étant donné $(X,\mathcal{K})$ un espace connectif, 
$F:\mathcal{K}^{op}\rightarrow\mathbf{Ens}$ un préfaisceau et
 $\dot{\mathcal{A}}\subset\mathcal{K}$ un ensemble de $\mathcal{K}$-connexes, 
 on appelle \emph{limite projective de $F$ sur $\dot{\mathcal{A}}$}, 
 ou encore, désignant par $\mathcal{A}=(\dot{\mathcal{A}},\subset)$ la sous-catégorie pleine de $\mathcal{K}$ définie par $\dot{\mathcal{A}}$, 
 \emph{limite projective de $F$ sur $\mathcal{A}$} 
 et l'on note ${\lim_{\leftarrow}}_{\mathcal{A}} F$
 ou ${\lim_{\leftarrow}}_{B\in\mathcal{A}} FB$
 la réalisation habituelle de la limite projective du diagramme d'ensembles définie par le foncteur covariant $F$ sur la catégorie $\mathcal{A}^{op}$, à savoir
\[
{\lim_{\longleftarrow}}_{\mathcal{A}} F
= {\lim_{\longleftarrow}}_{B\in\mathcal{A}} FB
=
\{
(f_B)_{B\in\dot{\mathcal{A}}}
\in\prod_{B\in\dot{\mathcal{A}}}FB,
\forall (B_1\supset C, B_2\supset C)\in \overrightarrow{\mathcal{A}^{op}}, 
{f_{B_1}}_{\vert C}={f_{B_2}}_{\vert C}
\}
\]
où le résultat $\rho_{BC}(f_B)$ de l'application de restriction $\rho_{BC}=F(B\supset C)$ à un élément $f_B\in FB$ est notée selon l'usage habituel ${f_B}_{\vert C}$.

\begin{rmq}\label{rmq * singleton produit}
Dès lors que la notion de produit d'une famille d'ensembles est considérée comme définie sans ambiguïté, la définition ci-dessus est elle-même dépourvue d'ambiguïté. En particulier, on suppose que le singleton défini par produit de la famille vide d'ensemble est connu, et l'on notera $*$ son unique élément. 
\end{rmq}

\subsubsection{Limite projective d'un préfaisceau sur un crible de connexes}

$(X,\mathcal{K})$ désignant toujours un espace connectif et 
$F:\mathcal{K}^{op}\rightarrow\mathbf{Ens}$ un préfaisceau, étant donné $A\in \mathcal{K}$ et $\sigma=(\dot{\sigma}, A)\in\mathrm{Cr}_\mathcal{K}(A)$, on appelle \emph{limite projective de $F$ sur $\sigma$}, et l'on note  ${\lim_{\leftarrow}}_{\sigma} F$
ou ${\lim_{\leftarrow}}_{\dot{\sigma}} F$ 
ou
${\lim_{\leftarrow}}_{(B\subset A)\in \sigma} FB$
ou encore ${\lim_{\leftarrow}}_{B\in\dot{\sigma}} FB$ la limite projective de $F$ sur l'ensemble $\dot{\mathcal{A}}=\dot{\sigma}$.

\subsubsection{Relation \og$FA\simeq {\lim_{\leftarrow}}_{\sigma} F$\fg}

$(X,\mathcal{K})$ désignant toujours un espace connectif et 
$F:\mathcal{K}^{op}\rightarrow\mathbf{Ens}$ un préfaisceau,  $A\in\mathcal{K}$ un connexe et $\sigma$ un crible sur $A$, on écrit $FA\simeq {\lim_{\leftarrow}}_{\sigma} F$ pour exprimer précisément le fait suivant : l'application 
\[\theta :FA\rightarrow {\lim_{\leftarrow}}_{\sigma} F\] définie pour tout $f_A\in FA$ par $\theta(f)=({f_A}_{\vert B})_{B\in\dot{\sigma}}$ est une bijection. De façon équivalente, on a donc $FA\simeq {\lim_{\leftarrow}}_{\sigma} F$ si et seulement s'il existe une application \[\lambda:{\lim_{\leftarrow}}_{\sigma} F\rightarrow  FA\] réciproque de $\theta$, autrement une application $\lambda$ qui \og recolle\fg\, toute famille \og cohérente\fg\, $(f_B)_{B\in\dot{\sigma}}\in {\lim_{\leftarrow}}_{\sigma} F $ en un unique élément $f=\lambda((f_B)_{B\in\dot{\sigma}})\in FA$ tel que $f_B=f_{\vert B}$ pour tout $B\in\dot{\sigma}$.

\subsubsection{Faisceaux sur $(X,\mathcal{K})$}

Un faisceau $F\in Sh(X,\mathcal{K})$ est un préfaisceau $F:\mathcal{K}^{op}\rightarrow \mathbf{Ens}$ tel pour tout connexe $A\in\mathcal{K}$ et tout crible couvrant $\sigma\in J(A)$, on a $FA\simeq {\lim_{\leftarrow}}_{\sigma} F$.

\begin{prop}
Pour tout faisceau $F\in Sh(X,\mathcal{K})$, on a $F(\emptyset)=\{*\}$, où $*$ est l'unique élément du produit de la famille vide d'ensemble (voir la remarque \ref{rmq * singleton produit} ci-dessus).
\end{prop}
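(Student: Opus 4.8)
The plan is to use the definition of a sheaf applied to the connex $A = \emptyset$, exploiting the fact that the empty set admits a distinguished covering sieve whose domain is empty. First I would recall that $\emptyset \in \mathcal{K}$ is always connex (by convention) and is never irreducible, so both sieves on $\emptyset$ are covering; in particular the \emph{empty} sieve $\sigma_0 = (\emptyset, \emptyset)$, whose domain $\dot{\sigma}_0 = \emptyset$ contains no element, is a covering sieve. Indeed $[\dot{\sigma}_0] = [\emptyset] = \mathcal{K}_{\vert \emptyset} = \{\emptyset\}$, so $\sigma_0 \in J(\emptyset)$.

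The key step is then to compute ${\lim_{\leftarrow}}_{\sigma_0} F$ explicitly. Since $\dot{\sigma}_0 = \emptyset$, the projective limit is the limit of $F$ over the empty index set $\dot{\mathcal{A}} = \emptyset$. By the definition of the projective limit given above, this is a subset of the product $\prod_{B \in \emptyset} FB$ over the empty family of sets, and the coherence condition (quantifying over $\overrightarrow{\mathcal{A}^{op}}$) is vacuously satisfied. By the convention fixed in Remark \ref{rmq * singleton produit}, the product of the empty family of sets is the singleton $\{*\}$, and the coherence condition imposes no restriction, so ${\lim_{\leftarrow}}_{\sigma_0} F = \{*\}$.

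Finally I would invoke the sheaf condition itself. Because $F$ is a sheaf and $\sigma_0 \in J(\emptyset)$ is a covering sieve, the relation $F(\emptyset) \simeq {\lim_{\leftarrow}}_{\sigma_0} F$ holds, meaning the canonical map $\theta : F(\emptyset) \to {\lim_{\leftarrow}}_{\sigma_0} F$ is a bijection. Combining this bijection with the computation ${\lim_{\leftarrow}}_{\sigma_0} F = \{*\}$ gives $F(\emptyset) \simeq \{*\}$, hence $F(\emptyset)$ is a singleton, which is the claim.

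I do not anticipate a genuine obstacle here: the entire argument is a routine unfolding of definitions, and the only subtle point is the bookkeeping around the empty family. The one thing to be careful about is to use the empty sieve $(\emptyset, \emptyset)$ rather than the minimal nonempty sieve $(\{\emptyset\}, \emptyset)$ --- the two were explicitly distinguished in an earlier remark, and it is precisely the \emph{empty} one whose limit collapses to $\{*\}$ via the empty-product convention. (Using $(\{\emptyset\}, \emptyset)$ would instead give ${\lim_{\leftarrow}} F = F(\emptyset)$ and yield no information, so selecting the right covering sieve is the decisive move.)
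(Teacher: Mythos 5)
Votre preuve est correcte et suit exactement la même voie que celle du papier, qui se contente d'observer que $\emptyset\in\mathcal{K}$ admet le crible vide comme crible couvrant ; votre rédaction ne fait qu'expliciter ce que le papier qualifie de \og conséquence immédiate\fg. Votre remarque finale sur la distinction entre le crible vide $(\emptyset,\emptyset)$ et le crible $(\{\emptyset\},\emptyset)$ identifie bien le seul point délicat de l'argument.
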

\begin{proof}
C'est une conséquence immédiate du fait que $\emptyset\in\mathcal{K}$ admet comme crible couvrant le crible vide.
\end{proof}

\subsubsection{Remarque : $J$ est sous-canonique}
\label{subsubs rmq J sous-canonique mais non canonique}

La topologie de Grothendieck $J$ sur $(\mathcal{K},\subset)$ associée à un espace connectif $(X,\mathcal{K})$ est \emph{sous-canonique}, ce qui signifie que tout préfaisceau représentable est un faisceau. 
En effet, pour tout connexe $C\in\mathcal{K}$, considérons le préfaisceau $F_C$ défini pour tout connexe $A\in\mathcal{K}$ par $F_C(A)=\{*\}$ si $A\subset C$ et $F_C(A)=\emptyset$ si $A \not\subset C$. Dans le premier cas, on a $F(A)=\lim_{\leftarrow \sigma} F$ pour tout crible $\sigma$ sur $A$, et en particulier pour tout crible couvrant, puisque $B\in\dot{\sigma}\Rightarrow B\subset C$. 
Dans le cas où $A \not\subset C$, si $\sigma\in J(A)$ alors il y a un $B\in\dot{\sigma}$ tel que $B \not\subset C$ --- si ce n'était pas le cas, on aurait $\dot{\sigma}\subset \mathcal{P}C$, d'où $A\in [\dot{\sigma}]\subset \mathcal{P}C$, ce qui est absurde --- de sorte que $F(B)=\emptyset$ et $F(A)=\lim_{\leftarrow \sigma} F$. 

Par contre, en général, $J$ n'est pas la topologie canonique sur l'ensemble ordonné $(\mathcal{K},\subset)$. 
Par exemple, dans l'ensemble ordonné  $\mathcal{K}=\{\emptyset, \{x_1\}, \{x_2\}, X\}$ des connexes de l'espace constitué de deux points connexes connectés\footnote{Voir l'exemple \ref{exm deux points connexes connectes}  page \pageref{exm deux points connexes connectes}.}, le maximum $X=\{x_1, x_2\}$ est irréductible de sorte que seul le crible maximal couvre $X$, tandis que ce même ensemble ordonné, pouvant être vu comme celui des ouverts d'un espace topologique constitué de deux points ouverts, a pour topologie canonique celle de l'espace topologique en question pour laquelle $X$ est couvert par $\{\emptyset, \{x_1\}, \{x_2\}\}$.

\subsection{Exemples de topos d'espaces connectifs finis}

\begin{exm}[topos vide]
Nous appellerons \emph{topos vide} le topos associé à l'espace connectif vide, d'ensemble de points $X=\emptyset$ et de structure connective le singleton $\mathcal{K}=\{\emptyset\}$. 
On a $J(\emptyset)=\{\emptyset, \{\emptyset\}\}$. Un faisceau $F$ sur l'espace vide doit vérifier $F(\emptyset)=lim_\emptyset$, la limite projective de la famille vide d'ensemble, de sorte que $F(\emptyset)=\{*\}$. 
Il y a donc un seul faisceau sur le vide, identifié au singleton $\{*\}$ dont l'identité est donc le seul morphisme. 
Le topos $\mathcal{TG}_{(\emptyset,\{\emptyset\})}$ est ainsi le topos dégénéré, réduit à la catégorie $\mathbf{1}$.
\end{exm}

\begin{exm}[Un point non connexe] Le topos de l'espace connectif constitué d'un seul point non connexe est le topos vide.
\end{exm}

\begin{exm}[Un point connexe] Le topos de l'espace connectif constitué d'un seul point connexe est équivalent au topos des ensembles.
\end{exm}

\begin{exm}[Deux points non connexes connectés] L'espace connectif constitué de deux points non connexes connectés est trivialement Morita-équivalent à un seul point connexe.
\end{exm}

\begin{exm}[Un point connexe et un point non connexe connectés] L'espace connectif constitué de deux points connectés dont l'un est connexe et l'autre non possède deux connexes irréductibles emboîtés, de sorte que son topos de Grothendieck est le topos des applications.
\end{exm}

\begin{exm}[Deux points connexes connectés]\label{exm deux points connexes connectes}
 $X=\{x_1, x_2\}$, $\mathcal{K}=\{\emptyset, \{x_1\},\{x_2\}, X\}$. \`{A} part la partie vide, tous les connexes sont irréductibles. En particulier, le crible $\sigma$ sur $X$ défini par $\dot{\sigma}=\{\emptyset,  \{x_1\},\{x_2\}\}$ ne couvre pas $X$, seul le crible maximal sur $X$ le couvrant. Les faisceaux sur cet espace sont les couples d'applications de même domaine, de la forme $(A\stackrel{f_1}{\rightarrow}B_1, A\stackrel{f_2}{\rightarrow}B_2)$.
\end{exm}

\begin{exm}[Topos borroméen]\label{exm borro}
$X=\{x_1, x_2, x_3\}$ et $\mathcal{K}=\{\emptyset, \{x_1\}, \{x_2\}, \{x_3\}, X\}$.
Les faisceaux sur l'espace borroméen sont les triplets d'applications de même domaine, de la forme $(A\stackrel{f_1}{\rightarrow}B_1, A\stackrel{f_2}{\rightarrow}B_2, A\stackrel{f_3}{\rightarrow}B_3)$.

\end{exm}

\section{Morita équivalences entre espaces connectifs finis et espaces topologiques finis}\label{sec Morita equi}

Conformément à la notion toposique de Morita-équivalence, s'ils ont des topos équivalents, deux espaces connectifs, ou encore un espace topologique et un espace connectif, etc., seront dit Morita-équivalents. Nous écrirons $A\sim_\mathrm{M} B$ pour exprimer que les objets mathématiques $A$ et $B$ sont Morita-équivalents, autrement dit qu'ils déterminent des sites qui, s'ils sont encore notés $A$ et $B$, vérifient $Sh(A)\simeq Sh(B)$.

\begin{prop}\label{prop tout connectif fini est Morita topologique}
Pour tout espace connectif fini, il existe à \emph{iso} près un unique espace topologique fini sobre qui lui soit Morita-équivalent.
\end{prop}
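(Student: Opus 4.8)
Le plan est de montrer que le topos $Sh(X,\mathcal{K})$ est localique et de frame fini, puis d'invoquer la dualité de Birkhoff entre treillis distributifs finis et ensembles ordonnés finis pour produire l'espace topologique sobre cherché. D'abord, je montrerais que $Sh(X,\mathcal{K})$ est localique. Comme $(\mathcal{K},\subset)$ est un ensemble ordonné, chaque ensemble de flèches $\mathrm{Hom}(B,A)$ est un sous-singleton, de sorte que tout préfaisceau représentable est un sous-objet du préfaisceau terminal $1$. La topologie $J$ étant sous-canonique (§\ref{subsubs rmq J sous-canonique mais non canonique}), ces représentables sont des faisceaux ; comme ils engendrent le topos, celui-ci est engendré par des sous-objets de $1$, donc est localique, d'où une équivalence $Sh(X,\mathcal{K})\simeq Sh(\mathcal{O})$ où $\mathcal{O}=\mathrm{Sub}(1)$ est le frame des sous-objets du terminal.

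Ensuite, j'identifierais $\mathcal{O}$. Un sous-faisceau $S\subset 1$ équivaut à la donnée, pour chaque $A\in\mathcal{K}$, de $S(A)\in\{\emptyset,\{*\}\}$, soumise à la fonctorialité --- la partie $D=\{A\in\mathcal{K} : S(A)=\{*\}\}$ est close vers le bas pour $\subset$ --- et à la condition de faisceau --- si un crible couvrant de $A$ a tous ses éléments dans $D$, alors $A\in D$. Ainsi $\mathcal{O}$ s'identifie au frame des parties descendantes $J$-closes de $\mathcal{K}$, frame fini puisque $\mathcal{K}$ l'est. Tout frame étant un treillis distributif, $\mathcal{O}$ est un treillis distributif fini.

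Pour l'existence, j'appliquerais le théorème de Birkhoff : $\mathcal{O}$ est isomorphe au treillis des parties descendantes de l'ensemble ordonné fini $P$ de ses éléments sup-irréductibles, dont on vérifie qu'ils sont exactement les $J$-clôtures des connexes irréductibles de $(X,\mathcal{K})$. Muni de la topologie d'Alexandrov dont les ouverts sont ces parties descendantes, $P$ est un espace topologique fini $T_0$, donc sobre, de frame d'ouverts $\mathcal{O}(P)\cong\mathcal{O}$. Comme les faisceaux sur un espace coïncident avec les faisceaux sur son frame d'ouverts, on obtient $Sh(P)\simeq Sh(\mathcal{O})\simeq Sh(X,\mathcal{K})$, ce qui fournit l'espace topologique fini sobre Morita-équivalent.

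Pour l'unicité, soit $T'$ un autre espace topologique fini sobre Morita-équivalent. Alors $Sh(T')\simeq Sh(P)$, et comme le frame $\mathrm{Sub}(1)$ est préservé par les équivalences de topos, $\mathcal{O}(T')\cong\mathcal{O}(P)$. Deux espaces sobres de frames d'ouverts isomorphes étant homéomorphes, on conclut $T'\cong P$. L'obstacle principal, et le seul endroit où la structure connective intervient véritablement, est l'identification menée aux deuxième et troisième étapes : il faudra décrire soigneusement les parties $J$-closes et leurs sup-irréductibles, en prenant garde à ce que $\mathcal{K}$ peut ne pas avoir de plus grand élément lorsque $X$ n'est pas connexe, ainsi qu'au rôle des deux cribles distincts sur la partie vide, qui force toute partie $J$-close à contenir $\emptyset$.
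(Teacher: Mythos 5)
Votre démonstration est correcte et suit essentiellement la même voie que la preuve donnée par le papier en appendice (section \ref{append autre preuve}) : le topos est localique parce que le site est un ensemble ordonné, tout locale fini est spatial par le théorème de Birkhoff, et la sobriété assure l'unicité. Vous ne faites que déplier les deux résultats que le papier cite en boîte noire --- le théorème 1 de \textbf{\S\,IX.5} de \cite{MacLaneMoerdijk:1992} (caractère localique, que vous retrouvez via la sous-canonicité de $J$, les représentables comme sous-objets de $1$ et l'identification de $\mathrm{Sub}(1)$ aux parties descendantes $J$-closes) et la spatialité des locales finis (que vous retrouvez via les sup-irréductibles munis de la topologie d'Alexandrov) --- ce qui rend l'argument plus autonome et un peu plus explicite, sans en changer la structure.
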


Nous donnons en appendice (section \ref{append autre preuve}) une courte preuve de la proposition \ref{prop tout connectif fini est Morita topologique} ci-dessus, preuve qui s'appuie sur des théorèmes connus mais qui présente l'inconvénient de ne pas préciser comment, concrètement, obtenir un espace topologique Morita-équivalent à un espace connectif fini donné.

Les considérations qui suivent donnent au contraire un procédé explicite de construction d'un tel espace topologique, et montrent également comment associer à tout espace topologique  un espace connectif qui lui est Morita-équivalent. Ces équivalences sont établies via les ensembles ordonnés, le topos d'un ensemble ordonné étant le topos de ses préfaisceaux (autrement dit, un ensemble ordonné définit un site en prenant pour topologie de Grothendieck la topologie triviale, pour laquelle l'unique crible couvrant un élément donné quelconque est le crible maximal).

Plus précisément, notant $\mathbf{FCnc}$ la catégorie des espaces connectifs finis, $\mathbf{FPos}$ celle des ensembles (partiellement) ordonnés finis et $\mathbf{FTop}$ celle des espaces topologiques finis, on se propose  dans les sections suivantes de définir quatre applications 
\[\mathcal{G}:\vert \mathbf{FCnc}\vert \rightarrow \vert \mathbf{FPos}\vert,\] 
\[\mathcal{Z}:\vert \mathbf{FPos}\vert \rightarrow \vert \mathbf{FCnc}\vert,\]  
\[\mathcal{H}:\vert \mathbf{FTop}\vert \rightarrow \vert \mathbf{FPos}\vert,\] 
\[\mathcal{E}:\vert \mathbf{FPos}\vert \rightarrow \vert \mathbf{FTop}\vert\]
dont nous montrerons qu'elles vérifient les propriétés suivantes :
\begin{itemize}
\item pour tout espace connectif fini $(X,\mathcal{K})$, on a 
\[(X,\mathcal{K})\sim_{\mathrm{M}}\mathcal{G}_{(X,\mathcal{K})}\in \vert \mathbf{FPos}\vert ,\]
\item tout espace topologique fini $(E,\mathcal{T})$, on a
 \[(E,\mathcal{T})\sim_{\mathrm{M}}\mathcal{H}_{(E,\mathcal{T})}\in\vert \mathbf{FPos}\vert,\]
\item pour tout ensemble partiellement ordonné fini $(\mathcal{A},\leq)$, on a
 \[(\mathcal{A},\leq)\sim_{\mathrm{M}} \mathcal{Z}_{(\mathcal{A},\leq)}\in \vert \mathbf{FCnc}\vert\] et  \[(\mathcal{A},\leq)\sim_{\mathrm{M}}\mathcal{E}_{(\mathcal{A},\leq)}\in \vert \mathbf{FTop}\vert.\]
\end{itemize}

\mbox{}

Il est parfaitement possible d'établir ces équivalences en explicitant les morphismes géo\-mé\-triques correspondants entre les topos de faisceaux concernés. \`{A} titre indicatif, je reproduis en appendice (section \ref{subs appendice verif de G par les faisceaux}) une telle vérification pour l'équivalence $(X,\mathcal{K})\sim_{\mathrm{M}}\mathcal{G}_{(X,\mathcal{K})}\in \vert \mathbf{FPos}\vert$. Mais comme me l'a fait remarquer Olivia Caramello, que je remercie ici pour cette observation, ces équivalences découlent immédiatement du lemme de comparaison de Grothendieck, que nous commencerons donc par rappeler dans la section suivante.

\subsection{Lemme de comparaison de Grothendieck}

Rappelons que,
un site $(\mathbf{C},J)$ étant donné, le lemme de comparaison de Grothendieck\footnote{\cite{Grothendieck:1972_SGA4} (SGA4), \textbf{§\,III.4.1}.} s'applique en particulier au cas d'une sous-catégorie pleine $\mathbf{D}$ de $\mathbf{C}$ \emph{dont les objets recouvrent ceux de $\mathbf{C}$} --- on dit que $\mathbf{D}$ est \emph{$J$-dense} --- au sens où, selon la formulation de ce lemme donnée par Olivia Caramello dans \cite{Caramello:20140923} : pour tout objet $c\in\dot{\mathbf{C}}$,
 le crible sur $c$ engendré par les flèches de la forme $d\rightarrow c$ avec $d\in\mathbf{D}$ couvre $c$. 
 
Lorsque cette condition est satisfaite, on a l'équivalence \[Sh(\mathbf{C},J)\simeq Sh(\mathbf{D}, J_{\vert \mathbf{D}}),\] où $J_{\vert \mathbf{D}}$ est la topologie de Grothendieck induite sur $\mathbf{D}$ par $J$, c'est-à-dire celle pour laquelle les cribles couvrants  $d\in\dot{\mathbf{D}}$ sont ceux qui engendrent dans $\mathbf{C}$ des cribles couvrant  $d\in\dot{\mathbf{C}}$.

\subsection{Application $\mathcal{G}$ telle que $(X,\mathcal{K}) \sim_{\mathrm{M}}\mathcal{G}_{(X,\mathcal{K})}\in \vert \mathbf{FPos}\vert$}
\label{subs equi G}

On sait\footnote{Selon la proposition 5 de \cite{Dugowson:201012}, qui résulte d'une récurrence finie évidente.} que les connexes irréductibles d'un espace connectif \emph{fini} engendrent sa structure\footnote{Dans le cas d'un espace connectif infini, ce n'est plus vrai en général. Par exemple, les seuls connexes irréductibles de la droite connective usuelle $\mathbf{R}$ sont les points, qui n'engendrent aucun autre connexe non vide.}. Par définition des cribles couvrants sur un connexe, on en déduit que les cribles couvrant un connexe $A\in\mathcal{K}$ d'un espace connectif fini sont exactement ceux qui contiennent tous les irréductibles inclus dans $A$.

Un espace connectif fini quelconque (non nécessairement intègre) $(X,\mathcal{K})$ étant donné, notons  $(\mathcal{G},\leq)$ l'ensemble ordonné par l'inclusion des parties connexes irréductibles\footnote{Il s'agit donc du graphe générique de l'espace connectif, tel qu'il a été défini dans \cite{Dugowson:201306}. Dans le cas particulier d'un espace fini intègre, c'est le graphe défini par les relations de contenance immédiate entre irréductibles --- \emph{i. e.} il y a une arrête $L\rightarrow K$ si et seulement si $L\supset K$ mais il n'existe par d'irréductible intermédiaire $M$ tel que $L\supset M\supset K$ ---  qui, dans \cite{Dugowson:201012}, avait été appelé le \emph{graphe générique} de cet espace.} : 
\[(\mathcal{G},\leq)= (\mathrm{Irr}_{(X,\mathcal{K})},\subset).\]

L'équivalence annoncée $Sh(X,\mathcal{K})\simeq \widehat{(\mathcal{G},\leq)}$ résulte alors immédiatement du lemme de comparaison de Grothendieck. 
En effet, la condition de densité est trivialement satisfaite avec $\mathbf{C}=(\mathcal{K},\subset)$ muni de la topologie $J=J_{(X,\mathcal{K})}$ et $\mathbf{D}={(\mathrm{Irr}_{(X,\mathcal{K})},\subset)}$, puisqu'elle revient précisément à dire que, pour toute partie connexe $C\in\mathcal{K}$, le crible sur $C$ engendrée par les parties connexes irréductibles $K\subset C$  est couvrant, ce qui est bien le cas\footnote{La topologie $J$ étant sous-canonique d'après la remarque de la section \textbf{§\,\ref{subsubs rmq J sous-canonique mais non canonique}}, et bien que cette condition ne soit en réalité pas  nécessaire ici pour conclure, on peut aussi appliquer la version du lemme de comparaison donnée en appendice, \textbf{§\,4, corrolaire 3} de \cite{MacLaneMoerdijk:1992}, en prenant encore comme crible couvrant $C$ le crible engendré par les parties connexes irréductibles incluses dans $C$.}. Un crible sur $K\in(\mathcal{G},\leq)$ étant alors couvrant pour la topologie induite $J_{\vert \mathbf{D}}$ si et seulement s'il contient tous les irréductibles inclus dans $K$, donc aussi $K$ lui-même, autrement dit si et seulement s'il s'agit du crible maximal sur $K$, on en déduit que $J_{\vert \mathbf{D}}$ est la topologie triviale sur ${(\mathcal{G},\leq)}$, d'où finalement
\[
Sh(X,\mathcal{K})\simeq \widehat{(\mathcal{G},\leq)}.
\]

On a donc prouvé la proposition suivante.

\begin{prop}\label{prop equi FCnc-FPos}
Pour tout espace connectif fini $(X, mathcal{K})$, on a 
\[(X, mathcal{K})\sim_{\mathrm{M}}(\mathrm{Irr}_{(X,\mathcal{K})},\subset).\]
\end{prop}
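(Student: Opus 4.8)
Le plan est d'établir directement l'équivalence de topos $Sh(X,\mathcal{K})\simeq\widehat{(\mathrm{Irr}_{(X,\mathcal{K})},\subset)}$ au moyen du lemme de comparaison de Grothendieck, le topos d'un ensemble ordonné fini s'identifiant au topos de ses préfaisceaux. Je poserais $\mathbf{C}=(\mathcal{K},\subset)$ muni de la topologie $J=J_{(X,\mathcal{K})}$ et $\mathbf{D}=(\mathrm{Irr}_{(X,\mathcal{K})},\subset)$, sous-catégorie pleine des connexes irréductibles. Tout reviendrait alors à vérifier que $\mathbf{D}$ est $J$-dense dans $\mathbf{C}$, puis à identifier la topologie induite $J_{\vert\mathbf{D}}$.

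Pour la densité, je m'appuierais sur le fait que, dans un espace connectif \emph{fini}, les connexes irréductibles engendrent la structure connective (proposition 5 de \cite{Dugowson:201012}). Ainsi, pour tout connexe $C\in\mathcal{K}$, on a $C\in[\{K\in\mathrm{Irr}_{(X,\mathcal{K})}, K\subset C\}]$, de sorte que le crible sur $C$ engendré par les flèches $K\to C$ issues d'irréductibles $K\subset C$ a pour domaine un ensemble qui engendre $\mathcal{K}_{\vert C}$ tout entier : c'est donc, par la définition même de $J$, un crible couvrant $C$. La condition de densité du lemme de comparaison se trouve ainsi satisfaite, ce qui fournit l'équivalence $Sh(\mathbf{C},J)\simeq Sh(\mathbf{D},J_{\vert\mathbf{D}})$.

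Il resterait à reconnaître $J_{\vert\mathbf{D}}$ comme la topologie triviale. Pour cela, j'observerais qu'un crible sur un irréductible $K$ est couvrant pour $J_{\vert\mathbf{D}}$ si et seulement s'il engendre dans $\mathbf{C}$ un crible couvrant $K$; or, $K$ étant irréductible, seul le crible maximal $\mathrm{msv}_\mathcal{K}(K)$ le couvre, ce qui force tout tel crible à contenir $K$ lui-même et donc à être le crible maximal sur $K$ dans $\mathbf{D}$. La topologie induite étant triviale, on a $Sh(\mathbf{D},J_{\vert\mathbf{D}})=\widehat{\mathbf{D}}=\widehat{(\mathrm{Irr}_{(X,\mathcal{K})},\subset)}$, d'où la Morita-équivalence $(X,\mathcal{K})\sim_{\mathrm{M}}(\mathrm{Irr}_{(X,\mathcal{K})},\subset)$ annoncée.

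Le point délicat me paraît être la vérification de la $J$-densité : elle repose de façon essentielle sur la finitude, via le résultat d'engendrement par les irréductibles — lequel tombe en défaut dans le cas infini, où les seuls irréductibles de $\mathbf{R}$ sont les points et n'engendrent aucun connexe non trivial — ainsi que sur la traduction correcte de la condition de densité en termes de cribles couvrants au sens de $J$. L'identification de $J_{\vert\mathbf{D}}$ à la topologie triviale n'est en revanche qu'une conséquence directe de la caractérisation des irréductibles par le fait que seul leur crible maximal est couvrant.
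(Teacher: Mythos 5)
Votre preuve est correcte et suit essentiellement la même démarche que celle du texte : application du lemme de comparaison de Grothendieck avec $\mathbf{C}=(\mathcal{K},\subset)$ et $\mathbf{D}=(\mathrm{Irr}_{(X,\mathcal{K})},\subset)$, densité obtenue par l'engendrement de la structure par les irréductibles dans le cas fini (proposition 5 de \cite{Dugowson:201012}, combinée à la proposition \ref{prop struct engend dans B} pour passer à $\mathcal{K}_{\vert C}$), puis identification de la topologie induite $J_{\vert\mathbf{D}}$ à la topologie triviale via la caractérisation des irréductibles comme les connexes couverts par leur seul crible maximal. Votre remarque finale sur le rôle essentiel de la finitude coïncide également avec celle du texte (contre-exemple de $\mathbf{R}$).
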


\begin{rmq}[$\mathcal{G}$ n'est pas fonctoriel]\label{rmq G non fonctoriel}

 Remarquons  que l'application $\mathcal{G}:\vert{\mathbf{FCnc}}\vert\rightarrow \vert{\mathbf{FPos}}\vert$ qui à tout espace connectif fini $(X,\mathcal{K})$ associe $\mathcal{G}_{(X,\mathcal{K})}=(\mathrm{Irr}_{(X,\mathcal{K})},\subset)$ ne se prolonge pas en un foncteur $\mathbf{FCnc}\rightarrow \mathbf{FPos}$.

Pour s'en convaincre, il devrait suffire de considérer le morphisme connectif $f=Id_X$ de l'espace borroméen $(X=\{x_1,x_2,x_3\}, \mathcal{K}=\{\emptyset,\{x_1\}, \{x_2\}, \{x_3\}, X\})$ vers l'espace de même ensemble de points $X$ et de structure 
\[\mathcal{L}=\{\emptyset,\{x_1\}, \{x_2\}, \{x_3\}, \{x_1, x_2\}, \{x_2, x_3\},  X\}\] défini par  $f(x_i)=x_i$ pour tout $i$, car il n'existe pas d'application croissante non triviale $f:\mathcal{G}_{(X,\mathcal{K})}\rightarrow \mathcal{G}_{(X,\mathcal{L})}$, et en particulier il n'en n'existe par telle que $f\{x_i\}=\{x_i\}$.
\end{rmq}

\subsection{Application $\mathcal{Z}$ telle que $(\mathcal{A},\leq)\sim_{\mathrm{M}} \mathcal{Z}_{(\mathcal{A},\leq)}\in \vert \mathbf{FCnc}\vert$}\label{subs equ Z}

Étant donné $(\mathcal{A},\leq)$ un ensemble fini (partiellement) ordonné quelconque, notons $\mathcal{Z}=\mathcal{Z}_{(\mathcal{A},\leq)}$ l'espace connectif $(Z, \mathcal{L})$ défini de la façon suivante :
\begin{itemize}
\item $Z=\mathcal{A}$,
\item $\mathcal{L}\subset \mathcal{P}Z$ est la structure connective sur $Z$ engendrée par les parties de $Z$ de la forme $\downarrow z=\{x\in Z, x\leq_{\mathcal{A}} z\}$ :
\[\mathcal{L}=[\{\downarrow z, z\in Z\}].\]
\end{itemize}

Il est immédiat que, pour tout $z\in Z$, on a $\downarrow z\in \mathrm{Irr}_\mathcal{Z}$, puisque si $\downarrow z$ n'était pas irréductible il devrait y avoir une partie propre $\downarrow y\subsetneqq \downarrow z$ telle que $z\in\downarrow y$, ce qui est absurde. On en déduit que $(\mathrm{Irr}_\mathcal{Z},\subset)=(\{\downarrow z, z\in Z\}, \subset)$. D'après la proposition \ref{prop equi FCnc-FPos}, on a 
\[
\mathcal{Z}\sim_{\mathrm{M}}(\{\downarrow z, z\in Z\}, \subset).
\]
Mais l'application $z\mapsto\downarrow z$ constitue un isomorphisme d'ensembles ordonnés $(\mathcal{A},\leq)\simeq(\{\downarrow z, z\in Z\}, \subset)$, d'où l'on déduit la proposition suivante :

\begin{prop}\label{prop equi FPos-FCnc}
Pour tout ensemble ordonné $(\mathcal{A},\leq)$, on a 
\[(\mathcal{A},\leq)\sim_{\mathrm{M}}\mathcal{Z}_{(\mathcal{A},\leq)}.\]
\end{prop}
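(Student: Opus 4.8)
Le plan consiste à ramener l'énoncé à la proposition \ref{prop equi FCnc-FPos} déjà démontrée, d'après laquelle tout espace connectif fini est Morita-équivalent à l'ensemble ordonné $(\mathrm{Irr},\subset)$ de ses connexes irréductibles. Il suffit donc d'identifier les connexes irréductibles de l'espace $\mathcal{Z}_{(\mathcal{A},\leq)}=(Z,\mathcal{L})$ construit ci-dessus, puis de constater que, munis de l'inclusion, ils reconstituent à isomorphisme près l'ordre de départ $(\mathcal{A},\leq)$.

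Je commencerais par établir l'égalité $(\mathrm{Irr}_{\mathcal{Z}},\subset)=(\{\downarrow z,\,z\in Z\},\subset)$ par double inclusion. L'inclusion $\{\downarrow z\}\subset\mathrm{Irr}_{\mathcal{Z}}$ repose sur l'observation que tout connexe de $\mathcal{Z}$ contenant le point $z$ contient en réalité $\downarrow z$ tout entier : c'est vrai pour chaque générateur $\downarrow y\ni z$, car alors $z\leq y$ donc $\downarrow z\subset\downarrow y$, et cela se propage aux unions définissant $\mathcal{L}=[\{\downarrow z\}]$. Ainsi $\downarrow z$ ne saurait être réunion de connexes tous strictement inclus dans lui, et est donc irréductible. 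Pour l'inclusion réciproque, j'utiliserais que $\mathcal{L}$ est engendrée par les $\downarrow z$ : tout connexe s'obtenant par réunions (à intersection non vide) de ces générateurs, un connexe irréductible --- qui ne peut être une telle réunion non triviale --- coïncide nécessairement avec l'un des $\downarrow z$. La proposition \ref{prop equi FCnc-FPos} fournit alors $\mathcal{Z}_{(\mathcal{A},\leq)}\sim_{\mathrm{M}}(\{\downarrow z,\,z\in Z\},\subset)$.

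Il resterait enfin à vérifier que $z\mapsto\downarrow z$ est un isomorphisme d'ensembles ordonnés $(\mathcal{A},\leq)\simeq(\{\downarrow z\},\subset)$ : cette application est surjective par construction, injective par antisymétrie de $\leq$, et vérifie $z\leq z'\Leftrightarrow\downarrow z\subset\downarrow z'$. Un tel isomorphisme induisant une équivalence entre les topos de préfaisceaux correspondants, la transitivité de la Morita-équivalence permet de conclure. La seule étape réellement délicate est l'inclusion $\mathrm{Irr}_{\mathcal{Z}}\subset\{\downarrow z\}$, qui repose de façon essentielle sur le fait --- rappelé en tête de la section \textbf{§\,\ref{subs equi G}} --- que dans un espace connectif fini les irréductibles engendrent toute la structure, et qu'un irréductible figure donc parmi toute famille génératrice.
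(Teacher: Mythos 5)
Votre démonstration est correcte et suit pour l'essentiel la même démarche que celle du texte : identification de $(\mathrm{Irr}_{\mathcal{Z}},\subset)$ avec $(\{\downarrow z,\, z\in Z\},\subset)$ (l'irréductibilité de $\downarrow z$ reposant sur la même observation clé, à savoir que tout connexe de $\mathcal{L}$ contenant $z$ contient $\downarrow z$), puis application de la proposition \ref{prop equi FCnc-FPos} et conclusion via l'isomorphisme d'ensembles ordonnés $z\mapsto{\downarrow z}$. Vous explicitez simplement davantage l'inclusion $\mathrm{Irr}_{\mathcal{Z}}\subset\{\downarrow z,\, z\in Z\}$, que le texte laisse implicite (\og on en déduit\fg), au moyen du fait --- correct, et qui découle d'ailleurs directement de la définition de l'irréductibilité, sans recours à la finitude --- qu'un connexe irréductible appartient nécessairement à toute famille génératrice de la structure.
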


\begin{rmq} L'espace connectif $(Z, \mathcal{L})=\mathcal{Z}=\mathcal{Z}_{(\mathcal{A},\leq)}$, dont la construction s'appuie largement sur l'existence de points non connexes puisque seuls les éléments minimaux de $(\mathcal{A},\leq)$ donnent lieu à des points connexes, n'est pas en général l'espace connectif le plus \og sobre\fg\, qui soit Morita-équivalent à l'ensemble ordonné $(\mathcal{A},\leq)$. Ainsi, si $(X,\mathcal{K})$ désigne l'espace connectif formé de deux points connexes connectés (voir l'exemple \ref{exm deux points connexes connectes}), son graphe générique $\mathcal{G}$ donne lieu à un espace connectif $(Z,\mathcal{L})=\mathcal{Z}_\mathcal{G}$ qui possède un point supplémentaire non connexe, de sorte que $(X,\mathcal{K})$ est plus \og économique\fg\, en points (ou \og sobre\fg) que $(Z,\mathcal{L})$.
\end{rmq}

\begin{rmq}[$\mathcal{Z}$ n'est pas fonctoriel]\label{rmq Z non fonctoriel}

Tout comme $\mathcal{G}$, l'application $\mathcal{Z}$ ne semble pas devoir donner lieu à un foncteur $\mathcal{Z}:\mathbf{FPos}\rightarrow \mathbf{FCnc}$. On ne voit pas, par exemple, quel morphisme connectif associer à l'application croissante $f:\mathcal{A}_1=(\{x_1\}, =)\rightarrow (\{x_2,y_2\}, x_2\leq y_2)=\mathcal{A}_2$ définie par $f(x_1)=y_2$, le point $y_2\in \mathcal{Z}_{\mathcal{A}_2}$ auquel il serait naturel d'associer $x_1\in \mathcal{Z}_{\mathcal{A}_1}$ n'étant pas connexe.
\end{rmq}

\subsection{Foncteur $\mathcal{H}:\mathbf{FTop}\rightarrow \mathbf{FPos}$ tel que $(E,\mathcal{T})\sim_{\mathrm{M}}\mathcal{H}_{(E,\mathcal{T})}$}\label{subs fonct H}

Soit $(E,\mathcal{T})$ un espace topologique fini. Nous dirons qu'un ouvert $U\in\mathcal{T}$ est irréductible s'il n'est pas l'\emph{union} d'une famille de ses parties ouvertes propres, autrement dit s'il est non vide et qu'il n'est pas l'union de deux parties ouvertes propres de $U$. Autrement dit encore, notant $(\mathcal{H}, \leq)$ l'ensemble ordonné par l'inclusion des parties ouvertes irréductibles de $(E,\mathcal{T})$, on a, pour tout ouvert $U$ \emph{non vide},
\[
U\in \mathcal{H} \Leftrightarrow 
\left(\forall (V,W)\in \mathcal{T}^2, U=V\cup W \Rightarrow (V=U \,\mathrm{ou}\, W=U)\right).
\]

Un singleton ouvert étant nécessairement irréductible, on vérifie im\-média\-tement par  une récurrence finie évidente  que tout ouvert de $E$ est l'union des ouverts ir\-ré\-ducti\-bles qu'il contient. Il en découle que l'ensemble ordonné $(\mathcal{H}, \leq)$ est une sous-catégorie de $(\mathcal{T},\subset)$ dense pour la topologie de Grothendieck canonique $J_{\mathcal{T}}$, de sorte que le lemme de comparaison de Grothendieck s'applique. On en déduit l'équivalence de Morita annoncée :

\begin{prop}\label{prop equi FTop-FPos}
Pour tout espace topologique fini $(E, mathcal{T})$, on a 
\[(E, mathcal{T})\sim_{\mathrm{M}}\mathcal{H}_{(E,\mathcal{T})}.\]
\end{prop}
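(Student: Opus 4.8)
Le plan est de reproduire exactement l'argument déjà mené pour $\mathcal{G}$ en \S\ref{subs equi G}, en remplaçant le rôle des parties connexes irréductibles par celui des ouverts irréductibles, et la topologie $J_{(X,\mathcal{K})}$ par la topologie canonique $J_{\mathcal{T}}$ sur l'ensemble ordonné $(\mathcal{T},\subset)$ des ouverts. Rappelons que, pour un espace topologique, cette topologie canonique est celle dont les cribles couvrant un ouvert $U$ sont ceux dont la réunion des éléments égale $U$, et que $Sh(\mathcal{T},J_{\mathcal{T}})$ est par définition le topos des faisceaux sur $(E,\mathcal{T})$.

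D'abord, je prouverais le lemme structurel déjà annoncé dans le texte : tout ouvert $U\in\mathcal{T}$ est la réunion des ouverts irréductibles qu'il contient. Je raisonnerais par récurrence finie sur le cardinal de $U$. Si $U$ est irréductible il n'y a rien à démontrer ; sinon $U=V\cup W$ avec $V,W$ ouverts propres, donc $\vert V\vert,\vert W\vert<\vert U\vert$, et l'hypothèse de récurrence exprime chacun de $V$ et $W$ comme réunion d'ouverts irréductibles, tous contenus dans $U$ ; leur ensemble recouvre donc $U$. La récurrence s'arrête sur les ouverts non vides minimaux, qui sont irréductibles puisque leur seul sous-ouvert propre est $\emptyset$ (en particulier tout singleton ouvert est irréductible).

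Ce lemme dit précisément que, pour tout ouvert $C\in\mathcal{T}$, le crible sur $C$ engendré par les ouverts irréductibles inclus dans $C$ a pour réunion $C$, donc est $J_{\mathcal{T}}$-couvrant. Autrement dit, $(\mathcal{H},\leq)$ est une sous-catégorie pleine $J_{\mathcal{T}}$-dense de $(\mathcal{T},\subset)$, et le lemme de comparaison de Grothendieck fournit $Sh(\mathcal{T},J_{\mathcal{T}})\simeq Sh(\mathcal{H},(J_{\mathcal{T}})_{\vert\mathcal{H}})$.

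Reste à identifier la topologie induite $(J_{\mathcal{T}})_{\vert\mathcal{H}}$ à la topologie triviale, étape que j'attends comme le point délicat. Un crible $S$ sur $K\in\mathcal{H}$ est couvrant pour $(J_{\mathcal{T}})_{\vert\mathcal{H}}$ si et seulement si le crible qu'il engendre dans $\mathcal{T}$ couvre $K$, c'est-à-dire si et seulement si $\bigcup_{L\in S}L=K$, chaque $L$ étant un ouvert irréductible inclus dans $K$. L'observation-clé est qu'un ouvert irréductible ne peut être la réunion d'\emph{aucune} famille de sous-ouverts propres, et pas seulement de deux : si $K=U_1\cup\dots\cup U_n$ avec tous les $U_i\subsetneq K$, alors en écrivant $K=U_1\cup(U_2\cup\dots\cup U_n)$ et en appliquant l'irréductibilité de proche en proche, l'un des $U_i$ devrait valoir $K$, ce qui est absurde (la finitude de $E$ assure que toute telle famille est finie). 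Par conséquent, dans tout crible couvrant $S$ sur $K$, l'un des éléments vaut $K$, donc $K\in S$ et $S$ est le crible maximal. Ainsi $(J_{\mathcal{T}})_{\vert\mathcal{H}}$ est triviale, $Sh(\mathcal{H},(J_{\mathcal{T}})_{\vert\mathcal{H}})=\widehat{(\mathcal{H},\leq)}$, et l'on obtient $Sh(E,\mathcal{T})\simeq\widehat{(\mathcal{H},\leq)}$, soit $(E,\mathcal{T})\sim_{\mathrm{M}}\mathcal{H}_{(E,\mathcal{T})}$.
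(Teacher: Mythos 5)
Votre preuve est correcte et suit pour l'essentiel la même démarche que celle du texte : récurrence finie montrant que tout ouvert est réunion des ouverts irréductibles qu'il contient, densité de $(\mathcal{H},\leq)$ dans $(\mathcal{T},\subset)$ pour la topologie canonique $J_{\mathcal{T}}$, lemme de comparaison de Grothendieck, puis trivialité de la topologie induite sur $\mathcal{H}$. Vous ne faites qu'expliciter ce que le texte laisse implicite : la récurrence \og évidente\fg\, d'une part, et d'autre part le fait qu'un ouvert irréductible n'est l'union d'\emph{aucune} famille de sous-ouverts propres --- ce qui est en réalité la définition retenue par le texte, votre \og observation-clé\fg\, n'étant que l'équivalence avec la caractérisation binaire.
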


\begin{rmq}\label{rmq H est fonctoriel} 
Pour toute partie $B\subset E$ d'un espace topologique fini, notons $\widetilde{B}=\bigcap\{W\in\mathcal{T}, W\supset B\}$ le plus petit ouvert contenant $B$. On a alors le lemme suivant.
\begin{lm} Étant donnée $f:(E_1,\mathcal{T}_1)\rightarrow(E_2,\mathcal{T}_2)$ une application continue entre espaces topologiques finis, et $\omega\in\mathcal{H}_1$ un ouvert irréductible dans $E_1$, on a
\[
\widetilde{f(\omega)}\in\mathcal{H}_2.
\]
\end{lm}
\begin{proof}
Posons $A=\widetilde{f(\omega)}$, et considérons deux ouverts $V$ et $W$ de $E_2$ tels que $A=V\cup W$. On a $\omega\subset f^{-1}(A)=f^{-1}(V)\cup f^{-1}(W)$, de sorte que, $f$ étant continue et $\omega$ étant irréductible, $\omega\subset f^{-1}(V)$ ou $\omega\subset f^{-1}(W)$, d'où l'on déduit $A=V$ ou $A=W$, ce qui prouve que $A$ est irréductible dans $E_2$.
\end{proof}
Le lemme ci-dessus permet alors de prolonger l'application $(E,\mathcal{T})\mapsto \mathcal{H}_{(E,\mathcal{T})}$ aux applications continues entre espaces topologiques finis, en posant pour tout $\omega\in\mathcal{H}_1$
\[
\mathcal{H}(f)(\omega)=\widetilde{f(\omega)}.
\]

Nous laissons au lecteur le soin de vérifier que l'on définit bien ainsi un foncteur covariant $\mathcal{H}:\mathbf{FTop}\rightarrow \mathbf{FPos}$, ce qui peut du reste se déduire également de la dualité d'Alexandrov (voir ci-après la remarque \ref{rmq H par Alexandrov}).

Par contre, $\mathcal{H}$ ne peut pas se prolonger en un foncteur contravariant $\mathbf{FTop}^{op}\rightarrow \mathbf{FPos}$, puisque par exemple l'injection canonique de l'espace vide dans l'espace topologique à un point ne saurait donner lieu à une application croissante de l'ensemble ordonné à un élément vers l'ensemble vide.
\end{rmq}

\begin{rmq}\label{rmq H par Alexandrov} On peut également vérifier que l'ensemble ordonné $\mathcal{H}_{(E,\mathcal{T})}$ est isomorphe à l'ensemble ordonné opposé à celui obtenu en quotientant par la relation d'équivalence $(x\preceq y$ et $y\preceq x)$ le pré-ordre de spécialisation $\preceq$ défini sur l'ensemble $E$ par : $x\preceq y$ si et seulement si $x$ appartient à l'adhérence de $y$. Plus précisément, tout élément $x\in E$ représente l'ouvert irréductible $\widetilde{\{x\}}$, tout ouvert irréductible est de cette forme, et l'on a $x\preceq y$ si et seulement si $\widetilde{\{y\}}\subset \widetilde{\{x\}}$. En particulier, l'application croissante $\mathcal{H}(f)$ que, dans la remarque \ref{rmq H est fonctoriel}, nous avons associée à une application continue quelconque $f$  correspond, dans le cas particulier des espaces topologiques finis et \emph{modulo} le passage aux ordres opposés et aux quotients, à la définition classique du foncteur qui à tout espace d'Alexandrov --- c'est-à-dire tout espace dans lequel l'intersection d'une famille quelconque d'ouverts est encore un ouvert --- associe l'ensemble de ses points pré-ordonné par spécialisation\footnote{Sur la dualité d'Alexandrov, voir la section \textbf{§\,4.1} de \cite{Caramello:20110317}.}.
\end{rmq}

\begin{rmq} L'ensemble des ouverts d'un espace topologique vérifie \emph{aussi}, trivialement, les axiomes des espaces connectifs. Il donc toujours possible d'associer à tout espace topologique l'espace connectif ayant même points et dont les connexes soient les ouverts du premier. Dans ce cas, la structure topologique et la structure connective considérées sont égales \emph{en tant qu'ensembles ordonnés},  mais comme  elles sont en général distinctes \emph{en tant que sites} (les cribles couvrants ne sont pas les mêmes),  les topos associés ne seront pas équivalents en général, comme l'illustre l'exemple élémentaire de l'espace topologique constitué de deux points ouverts et d'un point fermé.
\end{rmq}

\subsection{Foncteur $\mathcal{E}:\mathbf{FPos}\rightarrow \mathbf{FTop}$ tel que $(\mathcal{A},\leq)\sim_{\mathrm{M}}\mathcal{E}_{(\mathcal{A},\leq)}$}

Étant donné $(\mathcal{A},\leq)$ un ensemble fini (partiellement) ordonné quelconque, notons $\mathcal{E}=\mathcal{E}_{(\mathcal{A},\leq)}$ l'espace topologique $(E, \mathcal{T})$ défini de la façon suivante :
\begin{itemize}
\item $E=\mathcal{A}$,
\item $\mathcal{T}\subset \mathcal{P}Z$ est la structure topologique sur $E$ engendrée par les parties de $E$ de la forme $\downarrow e=\{x\in E, x\leq_{\mathcal{A}} e\}$.
\end{itemize}

La structure topologique engendrée par les $\downarrow e$ étant les unions des intersections de telles parties, on en déduit
\begin{itemize}
\item premièrement que tout ouvert est une section commençante pour la relation $\leq_\mathcal{A}$, autrement dit que pour tout ouvert $U\in\mathcal{T}$, tout $x\in U$ et tout $y\in E$, on a $y\leq x \Rightarrow y\in U$, 
\item deuxièmement que les ouverts de $E$ sont en fait exactement les sections commençantes, une telle section $A$ vérifiant $A=\bigcup_{a\in A}{\downarrow a}$.
\end{itemize}

Autrement dit, pour tout $U\in\mathcal{P}E$, on a $U\in\mathcal{T}\Leftrightarrow 
\left( \forall x\in U, {\downarrow x}\subset U\right)$.
 
Il en découle que, pour tout $e\in E$, l'ouvert non vide $\downarrow e$ est irréductible\footnote{Au sens donné en section \textbf{§\,\ref{subs fonct H}} de l'irréductibilité d'un ouvert d'un espace topologique.}, puisque si $\downarrow e$ n'était pas irréductible il devrait y avoir un ouvert propre $V\subsetneqq \downarrow e$ tel que $e\in V$, mais  dans ce cas on aurait $\downarrow e\subset V$, ce qui est absurde.

Par conséquent, $(\mathcal{H}_\mathcal{E},\subset)=(\{\downarrow e, e\in E\}, \subset)$. D'après la proposition \ref{prop equi FTop-FPos}, on a donc 
\[(\{\downarrow e, e\in E\}, \subset)\sim_{\mathrm{M}}\mathcal{E}_{(\mathcal{A},\leq)}.\]
Mais l'application $e\mapsto\downarrow e$ définit un isomorphisme d'ensembles ordonnés $(\mathcal{A},\leq)\simeq(\{\downarrow e, e\in E\}, \subset)$, de sorte que l'on a prouvé la proposition suivante :

\begin{prop}\label{prop equi FPos-FTop}
Pour tout ensemble ordonné $(\mathcal{A},\leq)$, on a 
\[(\mathcal{A},\leq)\sim_{\mathrm{M}}\mathcal{E}_{(\mathcal{A},\leq)}.\]
\end{prop}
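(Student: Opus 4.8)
The plan is to read the result off from the discussion immediately preceding the statement, assembling three facts by transitivity of $\sim_{\mathrm{M}}$. First I would apply Proposition~\ref{prop equi FTop-FPos} to the finite topological space $(E,\mathcal{T})=\mathcal{E}_{(\mathcal{A},\leq)}$ itself; this gives
\[
\mathcal{E}_{(\mathcal{A},\leq)}\sim_{\mathrm{M}}(\mathcal{H}_{\mathcal{E}},\subset),
\]
where $(\mathcal{H}_{\mathcal{E}},\subset)$ denotes the poset of irreducible open sets of $\mathcal{E}_{(\mathcal{A},\leq)}$ in the sense of \S\ref{subs fonct H}.

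Second, I would pin down $(\mathcal{H}_{\mathcal{E}},\subset)$ explicitly. The characterisation $U\in\mathcal{T}\Leftrightarrow(\forall x\in U,\ \downarrow x\subset U)$ established just above says that the opens are exactly the down-sets of $(\mathcal{A},\leq)$, and the same paragraph already shows that each non-empty $\downarrow e$ is irreducible. For the reverse inclusion I would note that any open $U$ equals $\bigcup_{e}\downarrow e$, the union being taken over the finitely many maximal elements $e$ of $U$; were there two distinct maximal elements, this union would exhibit $U$ as a join of two proper opens, contradicting irreducibility, so an irreducible open has a single maximal element $e$ and coincides with $\downarrow e$. Hence $(\mathcal{H}_{\mathcal{E}},\subset)=(\{\downarrow e,\ e\in E\},\subset)$.

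Third, I would verify that $e\mapsto\downarrow e$ is an order isomorphism $(\mathcal{A},\leq)\simeq(\{\downarrow e,\ e\in E\},\subset)$: surjectivity is by construction, the equivalence $e\leq e'\Leftrightarrow\downarrow e\subset\downarrow e'$ (using reflexivity, transitivity, and $e\in\downarrow e$) gives monotonicity in both directions, and injectivity follows from $\downarrow e=\downarrow e'\Rightarrow e\leq e'\leq e\Rightarrow e=e'$. Since order-isomorphic posets have the same presheaf topos and are therefore trivially Morita-equivalent, stringing the three steps together yields $(\mathcal{A},\leq)\sim_{\mathrm{M}}\mathcal{E}_{(\mathcal{A},\leq)}$.

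I do not expect a genuine obstacle: every ingredient is already in place and the argument is essentially bookkeeping. The only point worth flagging is that injectivity of $e\mapsto\downarrow e$ uses antisymmetry of the order, so the construction really requires $(\mathcal{A},\leq)$ to be a partial order and not merely a preorder.
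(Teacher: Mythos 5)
Your proof is correct and follows essentially the same route as the paper's: identify the irreducible opens of $\mathcal{E}_{(\mathcal{A},\leq)}$ as exactly the sets $\downarrow e$, apply Proposition~\ref{prop equi FTop-FPos}, and transport along the order isomorphism $e\mapsto\,\downarrow e$. Your maximal-element argument even supplies the inclusion $\mathcal{H}_{\mathcal{E}}\subset\{\downarrow e,\ e\in E\}$ that the paper leaves implicit in its \og par conséquent\fg\ (note it uses finiteness, which is indeed the standing hypothesis), so nothing is missing.
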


\begin{rmq}\label{rmq espace sobre}
Soient $x$ et $y$ deux points de $E=\mathcal{A}$. Supposons que $y$ appartienne à l'adhérence de $x$. Alors $x$ appartient à tout ouvert contenant $y$, et en particulier $x\in \downarrow y$, autrement dit $x\leq y$. On en déduit que deux points ayant la même adhérence sont nécessairement égaux. Par ailleurs, l'espace étant fini, tout fermé irréductible est nécessairement l'adhérence de l'un de ses points. On en déduit que l'espace topologique $\mathcal{E}_{(\mathcal{A},\leq)}$ est sobre. 
\end{rmq}
  
\begin{rmq}\label{rmq E est  fonctoriel}
De même que le foncteur $\mathcal{H}$ défini précédemment, l'application $\mathcal{E}$ se prolonge en un foncteur (covariant) $\mathbf{FPos}\rightarrow\mathbf{FTop}$. Cela peut se vérifier directement : étant donnée $f:(\mathcal{A}_1,\leq_1)\rightarrow(\mathcal{A}_2,\leq_2)$ une application croissante, et $(E_i,\mathcal{T}_i)=\mathcal{E}_{(\mathcal{A}_i,\leq_i)}$ l'espace topologique associé pour $i\in\{1,2\}$ à $\mathcal{A}_i$, l'application $f$ considérée comme application $E_1\rightarrow E_2$ est continue, puisque pour tout élément $y_1\in f^{-1}(\downarrow x_2)$  de l'image réciproque d'un ouvert irréductible $\downarrow x_2\in\mathcal{T}_2$ et tout $x_1\in E_1$, on a l'implication $x_1\leq_1 y_1\Rightarrow f(x_1)\leq_2 f(y_1)\leq_2 x_2$, d'où $x_1\in f^{-1}(\downarrow x_2)$, de sorte que $\downarrow y_1 \subset f^{-1}(\downarrow x_2)$, ce qui revient à dire que $f^{-1}(\downarrow x_2)$ est ouvert, d'où l'on déduit que l'image réciproque par $f$ de tout ouvert de $E_2$ est ouvert dans $E_1$. La fonctorialité de $\mathcal{E}$ est alors évidente. D'un autre côté, de même que pour le foncteur $\mathcal{H}$ considéré en section \textbf{§\,\ref{subs fonct H}}, la définition et les propriétés du foncteur $\mathcal{E}$ peuvent être ramenées à la dualité d'Alexandrov entre les espaces d'Alexandrov et les ensembles pré-ordonnés. Plus précisément, notant  $TA$ le foncteur qui à tout ensemble pré-ordonné associe l'espace topologique d'Alexandrov constitué des mêmes points et dont les ouverts sont les sections finissantes, le foncteur $\mathcal{E}$ consiste à appliquer successivement le foncteur d'inversion de l'ordre et le foncteur $TA$ aux ensembles ordonnés finis. De ces diverses propriétés, on déduit notamment que $\mathcal{H}\circ \mathcal{E}\simeq Id_{\mathbf{FPos}}$ et que $\mathcal{E}$ est adjoint à gauche de $\mathcal{H}$. Remarquons enfin que pour tout espace topologique fini $(E,\mathcal{T})$, l'espace $(\mathcal{E}\circ \mathcal{H})(E,\mathcal{T})$ est l'unique espace topologique sobre Morita-équivalent à $(E,\mathcal{T})$.
\end{rmq}

\subsection{Exemple et conclusion}  

Des propriétés des applications $\mathcal{G}$ et $\mathcal{Z}$ et des foncteurs $\mathcal{H}$ et $\mathcal{E}$, on déduit notamment que tout espace connectif fini est Morita-équivalent à un espace topologique fini sobre --- d'où la proposition \ref{prop tout connectif fini est Morita topologique} découle --- et que tout espace topologique fini est Morita-équivalent à un espace connectif fini. 

\begin{exm}
L'espace borroméen à trois points (voir l'exemple \ref{exm borro}) est Morita-équivalent à l'espace topologique à \emph{quatre} points dont trois points ouverts qui en engendrent la structure.
\end{exm}

Comme signalé, les applications $\mathcal{G}$ et $\mathcal{Z}$ ne sont pas fonctorielles. Aussi, toujours dans le cas fini, l'étude de la géométricité des morphismes connectifs reste-t-elle à mener : quels sont, parmi les morphismes connectifs, ceux qui \og passent aux topos\fg\,? Une telle étude s'intéressera en particulier aux morphismes connectifs \og irréductibles\fg, à savoir ceux qui transforment les connexes irréductibles en connexes irréductibles. Quant à la question des équivalences de Morita entre espaces topologiques et espaces connectifs dans le cas général, elle reste entièrement ouverte.

\section{Appendices}

\subsection{Une autre preuve de la proposition \ref{prop tout connectif fini est Morita topologique}}\label{append autre preuve}

La proposition \ref{prop tout connectif fini est Morita topologique} découle des propositions \ref{prop equi FCnc-FPos} et \ref{prop equi FPos-FTop} ainsi que de la remarque \ref{rmq espace sobre}. Comme annoncé, en voici une preuve courte, qui utilise des résultats connus mais qui présente l'inconvénient de ne pas expliciter la construction des espaces topologiques ainsi associés aux espaces connectifs.

\begin{proof}
Soit $(X,\mathcal{K})$ un espace connectif fini, de site $(\mathcal{K},J)$ et de topos $\mathcal{E}= Sh(\mathcal{K},J)$.
La catégorie $\mathcal{K}$ étant un ensemble ordonné, le topos $\mathcal{E}$ est localique d'après le théorème 1 de 
\textbf{§\,IX.5} 
donné dans \cite{MacLaneMoerdijk:1992}, 
et il est engendré par les sous-objets de son objet terminal $1_\mathcal{E}$. On en déduit qu'il existe un locale fini $L$ tel que $\mathcal{E}\simeq Sh(L)$. Or, d'après le  \emph{théorème de représentation de Birkhoff} ---
qui affirme qui tout treillis fini distributif $L$ est isomorphe au treillis $(\{\downarrow x, x\in \mathrm{Irr}(L)\},\subset)$ des parties de l'ensemble $\mathrm{Irr}(L)$ des éléments irréductibles de $L$ qui sont de la forme $\downarrow x=\{a \in Irr(L), a \leq x\}$, ensemble de parties qui est évidemment une topologie sur $\mathrm{Irr}(L)$ ---
tout locale fini est spatial. Il existe ainsi, à homéo\-mor\-phisme près, un unique espace topologique fini sobre  $T$ tel que $L\simeq\mathcal{O}(T)$, d'où $Sh(L) \simeq Sh(T)$ et finalement 
\[
\mathcal{E}\simeq Sh(T),
\] 
l'unicité de $T$ étant assurée par l'hypothèse de sa sobriété\footnote{
En effet, d'une part si deux locales ont le même topos, ces locales sont homéomorphes d'après la proposition 2 de la section \textbf{§\,IX.5} de \cite{MacLaneMoerdijk:1992}, et d'autre part un locale spatial est le locale associé, à homéomorphisme près, à un unique espace sobre, à savoir l'espace des points du locale en question (voir la proposition 2 de la section \textbf{§\,IX.3} de \cite{MacLaneMoerdijk:1992}).
}.
\end{proof}

\subsection{Vérification de $Sh(X,\mathcal{K})\simeq \widehat{\mathrm{Irr}_{(X,\mathcal{K})}}$ en termes de faisceaux}\label{subs appendice verif de G par les faisceaux}

Comme indiqué au début de la section \ref{sec Morita equi}, avant de faire appel au lemme de comparaison de Grothendieck, nous avions dans un premier temps établi les équivalences de Morita considérées par des constructions explicites en termes de faisceaux. A titre indicatif, donnons par exemple la preuve de l'équivalence de Morita entre un espace connectif fini quelconque $(X,\mathcal{K})$ et l'ensemble ordonné de ses connexes irréductibles $(\mathcal{G},\leq)= (\mathrm{Irr}_{(X,\mathcal{K})},\subset)$  en explicitant un foncteur ${R}:Sh(X,\mathcal{K})\rightarrow \widehat{(\mathcal{G},\leq)}$ et un foncteur $S:\widehat{(\mathcal{G},\leq)}\rightarrow Sh(X,\mathcal{K})$ constituant une équivalence entre les topos $Sh(X,\mathcal{K})$ et $\widehat{(\mathcal{G},\leq)}=\mathbf{Ens}^{(\mathcal{G},\geq)}$. 

Définissons d'abord le foncteur ${R}:Sh(X,\mathcal{K})\rightarrow \widehat{(\mathcal{G},\leq)}$ : pour tout faisceau $\varphi\in Sh(X,\mathcal{K})$, on définit $R\varphi$ par simple restriction de $\varphi$ aux connexes irréductibles $K\in \mathcal{G}\subset \mathcal{K}$; autrement dit, pour tout $K\in \mathrm{Irr}_{(X,\mathcal{K})}$, on a $R\varphi(K)=\varphi(K)$ et, pour tout morphisme de faisceaux $(\varphi_1\stackrel{\alpha}{\rightarrow}\varphi_2)\in \overrightarrow{Sh(X,\mathcal{K})}$, 
\[
(R\alpha)_K=\alpha_K : \varphi_1(K)\rightarrow \varphi_2(K).
\]

Le foncteur  $S:\widehat{(\mathcal{G},\leq)}\rightarrow Sh(X,\mathcal{K})$ est alors ainsi défini : pour tout préfaisceau $\psi\in \widehat{(\mathcal{G},\leq)}$, le faisceau $S\psi$ prend pour valeur en un connexe $A\in\mathcal{K}$ le produit fibré des valeurs prises par $\psi$ sur les connexes irréductibles inclus dans $A$. Plus précisément, 
\begin{itemize}
\item si $K\in \mathrm{Irr}_{(X,\mathcal{K})}=\mathcal{G}$,  on prend $S\psi(K)=\psi(K)$, de sorte que \[S\psi(K)\simeq {\lim_{\longleftarrow}}_{\{L\in\mathcal{G}, L\subseteq K\}}\psi, \]
\item si $A\in \mathrm{Red}_{(X,\mathcal{K})}=\mathcal{K}\setminus\mathcal{G}$, on prend \[S\psi(A)= {\lim_{\longleftarrow}}_{\{L\in\mathcal{G}, L\subset A\}}\psi,\]
\end{itemize}
les applications de restriction étant celles naturellement associées à ces produits fibrés, à savoir en particulier dans le cas des inclusions \og immédiates\fg\footnote{Nous disons que l'inclusion entre connexes $A\subset B$ est \emph{immédiate} s'il n'existe pas de connexe $C$ tel que $A\subsetneqq C \subsetneqq B$.} suivantes :
\begin{itemize}
\item si $A\subset K$, avec $A\in \mathrm{Red}_{(X,\mathcal{K})}$ et $K\in \mathrm{Irr}_{(X,\mathcal{K})}$, l'application de restriction $\rho_{S\psi_K, S\psi_A}$ est l'application $\psi_K \dashrightarrow {\lim_{\longleftarrow}}_{\{L\in\mathcal{G}, L\subset A\}}\psi$ définie à partir des restrictions $\rho_{\psi K,\psi L}$ par la propriété universelle du produit fibré ${\lim_{\longleftarrow}}_{\{L\in\mathcal{G}, L\subset A\}}\psi$,
\item si $C\subset A$, avec $C\in\mathcal{K}$ et $A\in \mathrm{Red}_{(X,\mathcal{K})}$, la restriction $\rho_{S\psi_A, S\psi_C}$ est la projection sur les composantes définies par $S\psi_C$  des familles constituant le produit fibré ${\lim_{\longleftarrow}}_{\{L\in\mathcal{G}, L\subset A\}}\psi$,
\item si $K_1\subset K_2$, ces deux connexes étant irréductibles, l'application de restriction $\rho_{S\psi_{K_2}, S\psi_{K_1}}$ est la même que dans $\mathcal{G}$.
\end{itemize}
Cette définition de l'image par $S$ des préfaisceaux $\psi\in\widehat{\mathcal{G}}$ se prolonge sans difficulté aux morphismes de  préfaisceaux $\psi_1\stackrel{\beta}{\rightarrow}\psi_2$, l'image, pour tout connexe $A\in\mathcal{K}$, d'une famille cohérente $(k_i,...)\in S\psi_1(A)$ par  $S\beta_A$ étant donnée par la famille cohérente $(\beta_{K_i}(k_i),...)\in S\psi_2(A)$. 

Par construction, on a alors trivialement $R\circ S=id_{\widehat{\mathcal{G}}}$, tandis que l'isomorphisme naturel $S\circ R \simeq Id_{Sh(X,\mathcal{K})}$ découle, quant à lui, des propriétés d'associativité des produits fibrés : intuitivement, la pleine fidélité du foncteur $R$ découle du fait que les irréductibles engendrent la structure connective, de sorte qu'à n'oublier que les $\varphi_A$, où $A$ décrit l'ensemble des connexes réductibles, on n'oublie rien du tout puisque les $\varphi_K$, avec $K\in\mathrm{Irr}_{(X,\mathcal{K})}$, permettent de les retrouver à isomorphisme près. Plus précisément, pour tout connexe $A\in\mathcal{K}$, désignant par $\sigma_{\{L\in\mathcal{G}, L\subset A\}}$ le crible sur $A$ engendré par les irréductibles inclus dans $A$, on a par définition d'un faisceau $\varphi$ :
\[
\varphi_A\simeq {\lim_{\longleftarrow}}_{\sigma_{\{L\in\mathcal{G}, L\subset A\}}}\varphi,
\]
d'où l'on déduit, en retirant par récurrence finie celles des composantes des familles cohérentes d'amalgamation $(k_i, b_j,...)$ associées aux connexes réductibles $B\subset A$, 
\[
\varphi_A\simeq {\lim_{\longleftarrow}}_{{\{L\in\mathcal{G}, L\subset A\}}}\varphi,
\]
autrement dit
\[
\varphi_A\simeq (S\circ R(\varphi))_A.
\]

\bibliographystyle{plain}




\newpage

 \tableofcontents

\end{document}